\newtheorem{defi}{Definici\'on}
\newtheorem{theo}{Teorema}
\newtheorem{prop}{Proposici\'on}
\newtheorem{lema}{Lema}
\newcommand{\arc}[1]{\ensuremath{\overset{\frown}{\raisebox{0pt}[6pt]{#1}}}}
\begin{document}
\title{?`Podemos aceptar la definici\'on usual de las funciones trigonom\'etricas?}
\author{{\bf \large Ignacio Tejeda}\\ [0.1cm]
Facultad de Matem\'aticas\\ [0.05cm]
Pontificia Universidad Cat\'olica de Chile}
\maketitle

\begin{abstract}
Seg\'un diversos autores, un \'angulo es un objeto geom\'etrico formado en una superficie plana por una recta $\ell$ y otra recta $\ell'$ que la intersecta en alg\'un punto $A$. Si sobre tal superficie se trazan los ejes ortogonales $X$,$Y$, de modo que $X$ coincida con $\ell$ y el origen est\'e situado en $A$, se puede identificar el \'angulo $\angle A$ con el punto de intersecci\'on entre la recta $\ell'$ y la circunferencia de centro $A$ y radio $1$, lo que permite definir el seno de $\angle A$ como la ordenada del punto con que este \'angulo se identifica. Sin embargo, esta definici\'on s\'olo da significado al seno del objeto geom\'etrico $\angle A$, pero no al seno de un n\'umero real.\\
\\
La deficiencia anterior se resuelve si a cada punto $P$ de la circunferencia unitaria se le asigna la longitud $\varphi(P)$ del arco que va de $(1,0)$ a $P$, pues de este modo la expresi\'on $\sen(x)$ puede definirse como la ordenada del punto $\varphi^{-1}(x)$. Esto es intuitivamente adecuado, pero da pie a algunas interrogantes:\\
\\
1.\ ?`Qu\'e entendemos por longitud de un arco de circunferencia?\\
\\
Para simplificar ideas consideraremos la porci\'on de la circunferencia unitaria que reposa en el primer cuadrante, y le llamaremos $\mathcal C$. En la primera parte de este trabajo construiremos una sucesi\'on de poligonales adecuadas para aproximar un arco $\arc{AB}$ de $\mathcal{C}$. Probaremos que la sucesi\'on de las longitudes de estas poligonales es convergente, y definiremos la longitud de $\arc{AB}$ como su l\'imite. Hemos dicho que las poligonales deben ser adecuadas porque de otro modo obtendr\'iamos resultados no deseados. Por ejemplo, si se construye una sucesi\'on con poligonales \emph{escalonadas} como la de la Figura \ref{fig0}, la sucesi\'on de longitudes ser\'ia constante e igual a la longitud de una poligonal formada por un \'unico escal\'on, lo cual obviamente echa por la borda el prop\'osito de aproximar el arco $\arc{AB}$.

\begin{figure}[htbp!]
\centering
\includegraphics[scale=0.7]{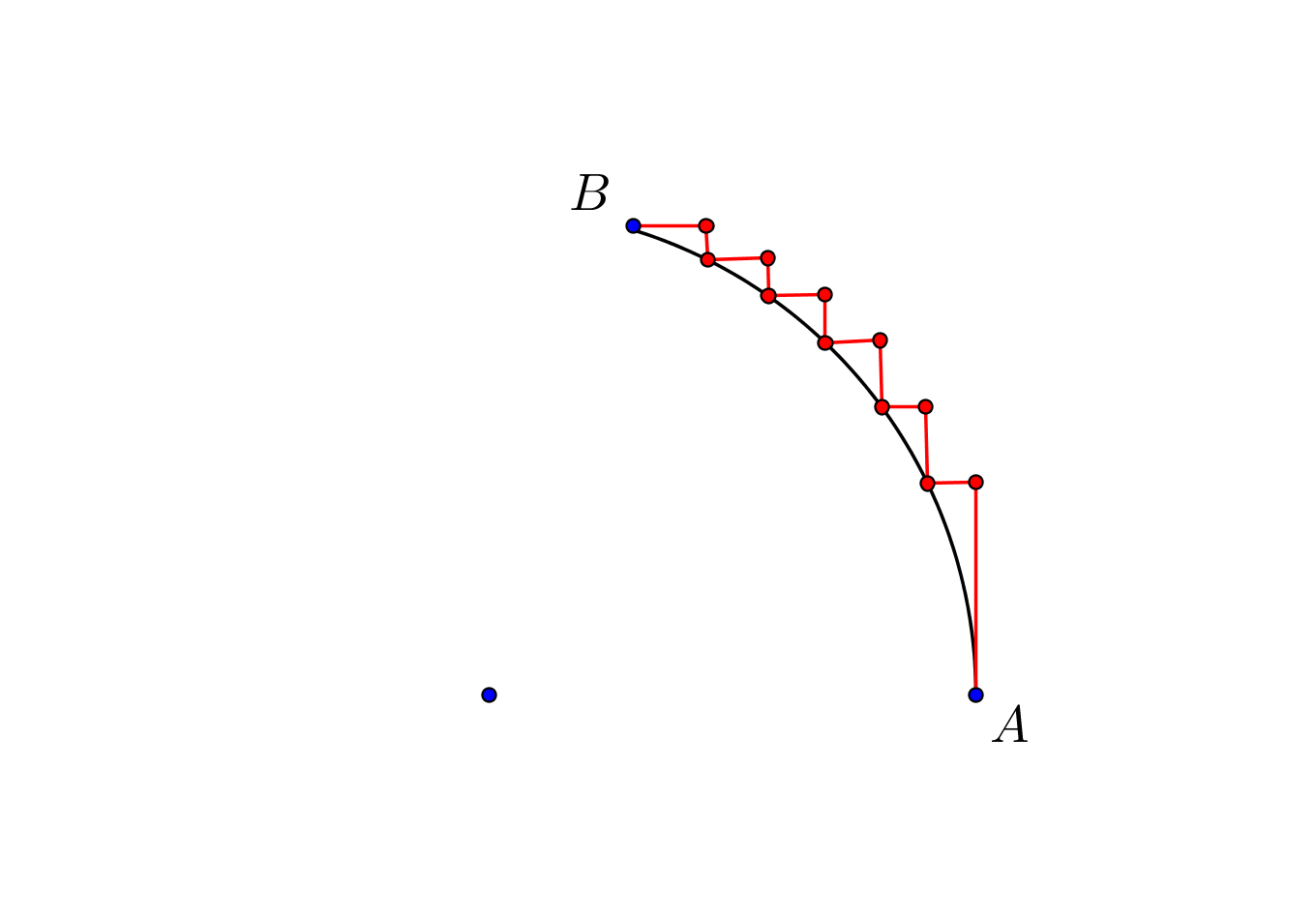}
\caption{\label{fig0} Una poligonal inapropiada.}
\end{figure}

\noindent\\
Adicionalmente, definiremos el \'area del sector circular determinado por $A$ y $B$, y mostraremos que esta guarda una relaci\'on de $1:2$ con la longitud de $\arc{AB}$. Esta relaci\'on ser\'a nuestra principal herramienta para responder la siguiente pregunta.\\
\\
2.\ ?`Es $\varphi$ una biyecci\'on entre $\mathcal{C}$ y $[0,\frac{\pi}{2}]$?\\
\\
Si queremos que la definici\'on de $\sen(x)$ que dimos arriba tenga sentido para cada $x\in [0,\frac{\pi}{2}]$, es necesario responder lo siguiente: ?`Es cierto que para cada $x$ existe un punto $P\in\mathcal{C}$ tal que $\varphi(P)=x$? ?`es \'unico este $P$? Si en lugar de un arco de circunferencia pensamos en la recta real, la primera propiedad se verifica de inmediato gracias al axioma del supremo, sin embargo no tenemos un ``axioma del supremo circular'' (!`tampoco queremos definir un nuevo axioma para cada curva que podamos imaginar!). Para resolver esto asignaremos, a cada $y$ tal que $0\le y\le 1$, la longitud del arco que va de $(1,0)$ al punto en $\mathcal{C}$ que tiene ordenada $y$ (esto corresponde, de hecho, a la definici\'on de $\arcsen(y)$). Probaremos que esta correspondencia es continua y mon\'otona, y aplicaremos el \textit{teorema del valor intermedio} (su enunciado y demostraci\'on pueden encontrarse, por ejemplo, en \cite[Teo. 5.2.1, p. 177]{Kit}) para concluir que es una biyecci\'on, lo que significar\'a que $\varphi$ tambi\'en lo es. Esto permitir\'a definir $\sen(x)$ para cada $x\in [0,\frac{\pi}{2}]$ sin ambigüedad como la ordenada del \'unico punto $P\in\mathcal C$ que satisface $\varphi(P)=x$.\\
\\
3.\ ?`Qu\'e sucede con la longitud de un arco de circunferencia si se usa otra sucesi\'on de poligonales para definirla?\\
\\
La respuesta a la primera pregunta entrega esencialmente un ejemplo de integraci\'on. Desde este punto de vista, y considerando que nuestra demostraci\'on de la continuidad de $\varphi$ depende de la relaci\'on de proporcionalidad probada en la primera parte, y por lo tanto tambi\'en de la definici\'on de la longitud de $\arc{AB}$, resulta vital preguntarse qu\'e ocurre si se elige una sucesi\'on de poligonales diferente para definir la longitud de $\arc{AB}$. En la tercera parte de este trabajo probaremos que si se elige cualquier sucesi\'on de poligonales que cumpla una determinada condici\'on, entonces el l\'imite de la sucesi\'on de longitudes asociada corresponde exactamente a la longitud definida en la primera parte.
\end{abstract}

\section{Longitud de arco y \'area de un sector circular}
Sea $\mathcal{C}$ el cuarto de circunferencia descrito en el resumen. Dados $A,B\in\mathcal{C}$ nos proponemos dos cosas: definir la longitud del arco $\arc{AB}$ y definir el \'area del sector circular determinado por este arco, que denotaremos por $S(A,B)$. Para construir nuestras definiciones a partir de objetos m\'as sencillos, como lo son los segmentos rectos y los pol\'igonos, describiremos el arco a partir de poligonales y el sector circular a partir de pol\'igonos. En la Figura \ref{fig1} se muestran una poligonal y un pol\'igono aproximando a $\arc{AB}$ y $S(A,B)$, respectivamente.

\begin{figure}[htbp!]
\centering
\includegraphics[scale=1]{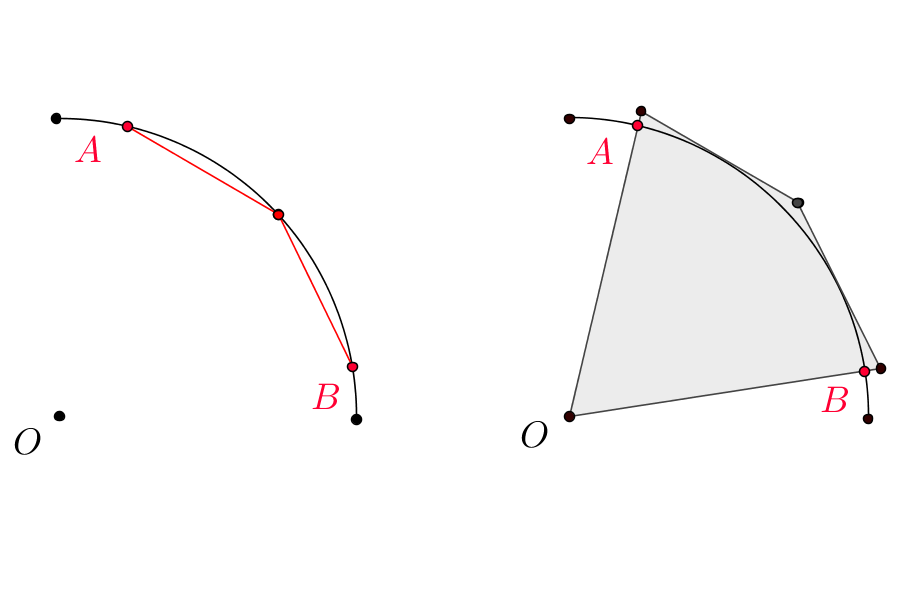}
\caption{\label{fig1} Aproximaci\'on por poligonales y pol\'igonos.}
\end{figure}

\subsection{Desarrollo de algunas herramientas}
Dado el rol que juega el teorema del valor intermedio (TVI) en este trabajo comenzaremos por enunciarlo.

\begin{theo}(\cite[Teo. 5.2.1, p. 177]{Kit}) Si $f:[a,b]\to\mathbb{R}$ es continua y $f(a)\not=f(b)$ entonces dado cualquier n\'umero $y$ entre $f(a)$ y $f(b)$ existe $x\in(a,b)$ tal que $f(x)=y$.
\end{theo}

\begin{lema}
Dados dos puntos $A$,$B$ en $\mathcal{C}$, existe un punto intermedio $P\in\mathcal{C}$ tal que $|AP|=|PB|\le\frac{|AB|}{\sqrt{2}}$. La circunferencia $\mathcal{C}$ queda por debajo de la recta que pasa por $P$ y es perpendicular a $OP$, donde $O=(0,0)$.
\end{lema}
\begin{proof}
Todo punto en $\mathcal{C}$ satisface su ecuaci\'on, a saber $x^2+y^2=1$. Esto permite escribir $A=(\sqrt{1-y_1^2},y_1),B=(\sqrt{1-y_2^2},y_2)$ para algunos $y_1,y_2\in (0,1)$ fijos.\footnote{La existencia de ra\'ices cuadradas puede justificarse aplicando el TVI a la funci\'on $f(x)=x^2$.} Para cada $y\in (0,1)$ consideremos el punto $P_y=(\sqrt{1-y^2},y)\in\mathcal{C}$ y definamos la funci\'on
$$h(y):=|AP_y|-|P_yB|$$
$$=\sqrt{\left(\sqrt{1-y^2}-\sqrt{1-y_1^2}\right)^2+(y-y_1)^2}-\sqrt{\left(\sqrt{1-y^2}-\sqrt{1-y_2^2}\right)^2+(y-y_2)^2}.$$
Esta funci\'on es continua en $(0,1)$ por obtenerse como suma, multiplicaci\'on y composici\'on de funciones continuas en los dominios correspondientes. Adem\'as $h(y_1)=0-|AB|=-(|AB|-0)=-h(y_2)$. Por el teorema del valor intermedio debe haber alg\'un $y$ entre $y_1$ e $y_2$ tal que 
$h(y)=0$; para este $y$ se verifica
$$|AP_y|-|P_yB|=0\Leftrightarrow |AP_y|=|P_yB|.$$

\noindent Para ver que $|AP|\le\frac{|AB|}{\sqrt{2}}$ consideremos la Figura \ref{fig2}, donde $P$ es tal que $|AP|=|PB|$. Aqu\'i $|OA|=|OB|$ (ambos segmentos son radios) y el trazo $OP$ es compartido por los tri\'angulos $\triangle OPA$ y $\triangle OPB$, de modo que ambos deben ser congruentes por el criterio lado-lado-lado, dado lo cual se tiene $\angle APQ=\angle BPQ$. Como el segmento $PQ$ es compartido por los tri\'angulos $\triangle QPA$ y $\triangle QPB$, por el criterio lado-\'angulo-lado podemos decir que estos \'ultimos son congruentes, luego $|AQ|=\frac{|AB|}{2}$ y $\angle AQP=\angle BQP$, por lo tanto
$$\pi=\angle AQP+\angle BQP=2\angle AQP\Rightarrow \angle AQP=\frac{\pi}{2}.$$

\begin{figure}[htbp!]
\centering
\includegraphics[scale=0.5]{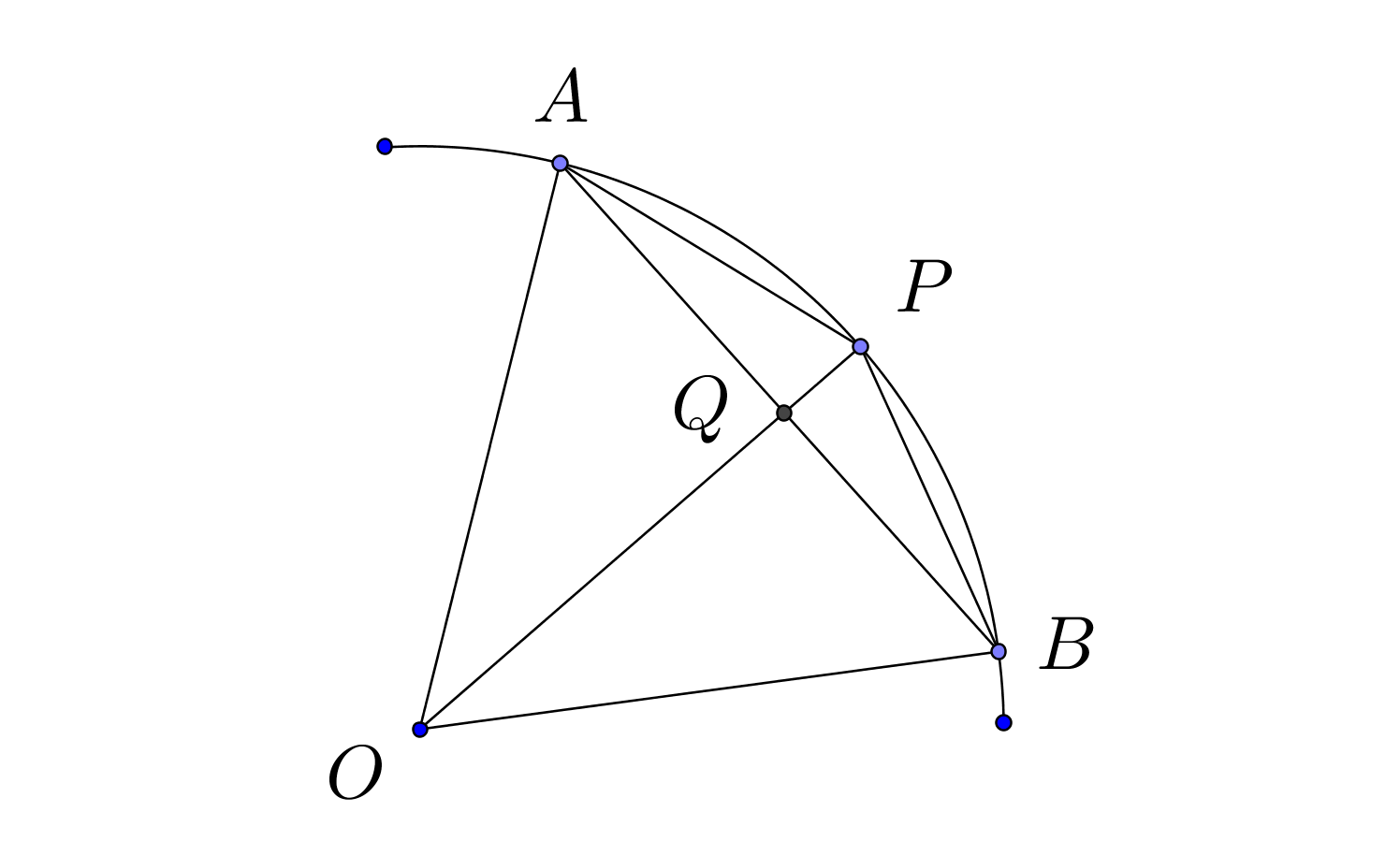}
\caption{\label{fig2} Punto ``medio'' entre $A$ y $B$.}
\end{figure}

\noindent As\'i, aplicando el teorema de Pit\'agoras al tri\'angulo $\triangle AQP$ y notando que $|OQ|+|QP|=1$ vemos que
$$|AP|^2=|AQ|^2+|QP|^2=|AQ|^2+(1-|OQ|)^2=|AQ|^2+|OQ|^2+1-2|OQ|.$$
Aplicando ahora el teorema de Pit\'agoras sobre $\triangle OQA$ y recordando que $OA$ es radio de la circunferencia, obtenemos
$$|AQ|^2+|OQ|^2+1-2|OQ|=1+1-2|OQ|=2(1-|OQ|)=\frac{2(1-|OQ|^2)}{1+|OQ|}$$
$$\le 2(1-|OQ|^2)=2|AQ|^2.$$
Como vimos antes, $|AQ|=\frac{|AB|}{2}$, por lo tanto hemos concluido que
$$|AP|^2\le 2|AQ|^2=2\cdot\left(\frac{|AB|}{2}\right)^2\Rightarrow |AP|\le \sqrt{2}\cdot\frac{|AB|}{2}=\frac{|AB|}{\sqrt{2}}.$$
Para probar la \'ultima afirmaci\'on del lema, sea $l$ una recta pasando por $P$, perpendicular a $OP$ (Figura \ref{fig3}). Esta recta divide el plano cartesiano en dos regiones $R_1$ y $R_2$ disjuntas como se muestra m\'as abajo, donde $l\subset R_1$. Si $X$ es cualquier punto en $R_2$, entonces hay dos opciones: $X$ est\'a en la recta obtenida al prolongar $OP$; $X$ no est\'a en tal recta. En el primer caso, vemos que $|OX|>|OP|=1$, luego $X\notin\mathcal{C}$. En el otro caso consideramos $OX$ y trazamos el segmento $YX$ paralelo a $l$, con $Y$ en la recta determinada por el segmento $OP$. Por el teorema de Pit\'agoras tendremos que
$$|OX|=\sqrt{|OY|^2+|YX|^2}> \sqrt{1+|YX|^2} >1,$$
lo que indica que $X\notin\mathcal{C}$. As\'i, si alg\'un punto $X'$ est\'a en $\mathcal{C}$ necesariamente debe tenerse que $X'\in R_1$.

\begin{figure}[htbp!]
\centering
\includegraphics[scale=0.55]{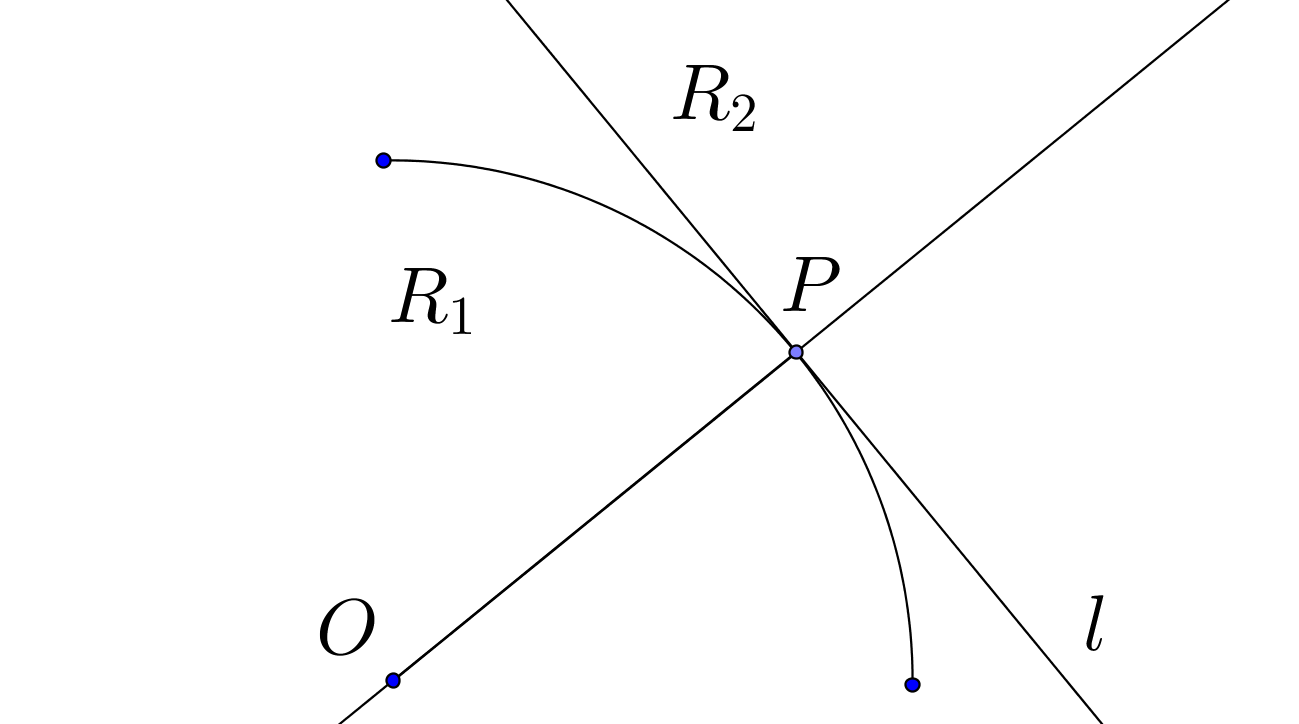}
\caption{\label{fig3}La circunferencia queda por debajo de la recta tangente.}
\end{figure}

\end{proof}

M\'as adelante, en el proceso de definir las magnitudes mencionadas al comienzo, obtendremos colecciones finitas de puntos $\{P_i\}_{i=0}^n$ en $\mathcal{C}$ y prestaremos atenci\'on a dos tipos de objetos asociados a cada una de ellas: una poligonal descrita por los puntos $A=P_0,P_1,\ldots,P_n=B$, y una colecci\'on de tri\'angulos $\triangle P_{i-1}OP_i$ disjuntos, como se muestra en la Figura \ref{fig4}.\\

\begin{figure}[htbp!]
\centering
\includegraphics[scale=0.43]{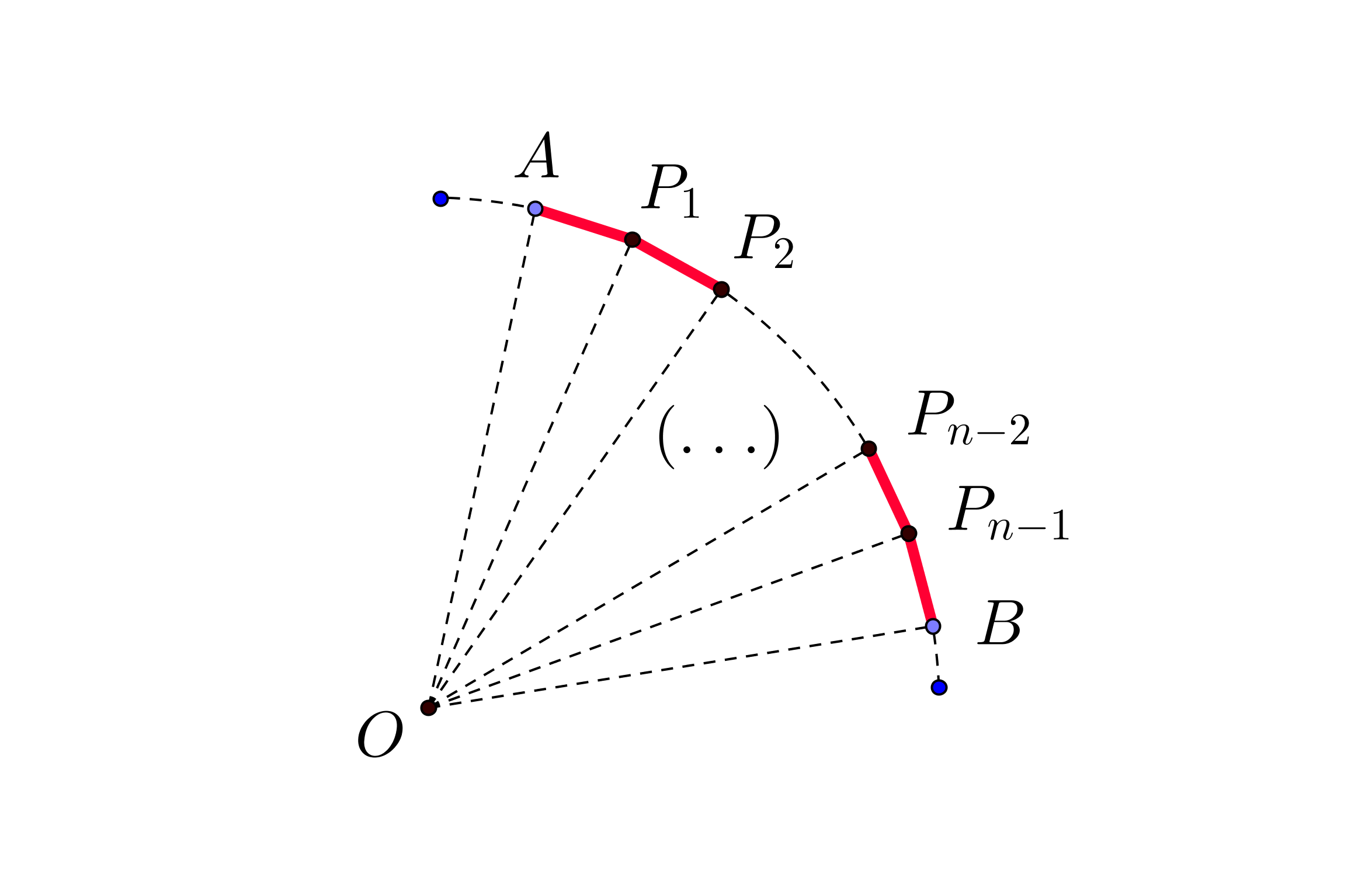}
\caption{\label{fig4}Una poligonal (en rojo) y los tri\'angulos correspondientes.}
\end{figure}

\noindent El siguiente lema permitir\'a probar que mientras m\'as fina sea la colecci\'on $\{P_i\}$ elegida mayor ser\'a el parecido entre las alturas en $O$ de los tri\'angulos $\triangle P_{i-1}OP_i$ y el radio de la circunferencia, y por lo tanto mayor ser\'a el parecido entre los pol\'igonos, uno interior y otro exterior, que se originen a partir de $\{P_i\}$. Como veremos en la sub-secci\'on 1.2, esta condici\'on ser\'a de vital importancia.

\begin{lema}
Dados $Q_1,Q_2$ sobre $\mathcal{C}$ entre $A$ y $B$, se tiene que $|Q_1Q_2|\le |AB|$.
\end{lema}

\begin{proof}
Dados $P_y,P_{y'}\in\mathcal{C}$, diremos que $P_y>P_{y'}$ si sus ordenadas $y,y'$ son tales que $y>y'$. Supongamos sin p\'erdida de generalidad que $A>Q_1>Q_2>B$. Escribamos entonces $A=(\sqrt{1-y_a^2},y_a)$, $Q_1=(\sqrt{1-y_1^2},y_1)$, $Q_2=(\sqrt{1-y_2^2},y_2)$, $B=(\sqrt{1-y_b^2},y_b)$, para $y_a>y_1>y_2>y_b$. Mostraremos que $|Q_1Q_2|\le |AB|$, viendo primero que $|Q_1Q_2|\le |AQ_2|$ y luego que $|AQ_2|\le |AB|$. Para lo primero calculemos $|Q_1Q_2|^2$ y probemos que $|Q_1Q_2|^2\le |AQ_2|^2$:
$$|Q_1Q_2|^2=\left(\sqrt{1-y_1^2}-\sqrt{1-y_2^2}\right)^2+\left(y_1-y_2\right)^2\le \left(\sqrt{1-y_1^2}-\sqrt{1-y_2^2}\right)^2+(y_a-y_2)^2.\ \ \ (*)$$
Por otra parte 
$$1>y_a>y_1>0\Rightarrow 0<\sqrt{1-y_a^2}<\sqrt{1-y_1^2}\Rightarrow 0>-\sqrt{1-y_a^2}>-\sqrt{1-y_1^2}$$
$$\Rightarrow \sqrt{1-y_2^2}-\sqrt{1-y_a^2}>\sqrt{1-y_2^2}-\sqrt{1-y_1^2}.$$
Adem\'as $\sqrt{1-y_2^2}-\sqrt{1-y_1^2}>0$ porque $y_2<y_1$, luego 
$$\left(\sqrt{1-y_2^2}-\sqrt{1-y_a^2}\right)^2\ge \left(\sqrt{1-y_2^2}-\sqrt{1-y_1^2}\right)^2.$$
De esto y $(*)$ se sigue que
$$|Q_1Q_2|^2\le  \left(\sqrt{1-y_2^2}-\sqrt{1-y_1^2}\right)^2+(y_a-y_2)^2$$
$$\le \left(\sqrt{1-y_2^2}-\sqrt{1-y_a^2}\right)^2+(y_a-y_2)^2=|AQ_2|^2,$$
dado lo cual resulta claro que $|Q_1Q_2|\le |AQ_2|$.\\
\\
Hemos probado en general que si $A>C>B$ son tres puntos sobre $\mathcal{C}$ entonces $|CB|\le |AB|$ y $|AC|\le |AB|$. Aplicando este resultado sobre $A$, $Q_2$ y $B$, podemos afirmar que $|AQ_2|\le |AB|$, lo que permite concluir lo requerido: $|Q_1Q_2|\le |AQ_2|\le |AB|$.
\end{proof}

\begin{figure}[htbp!]
\centering
\includegraphics[scale=0.5]{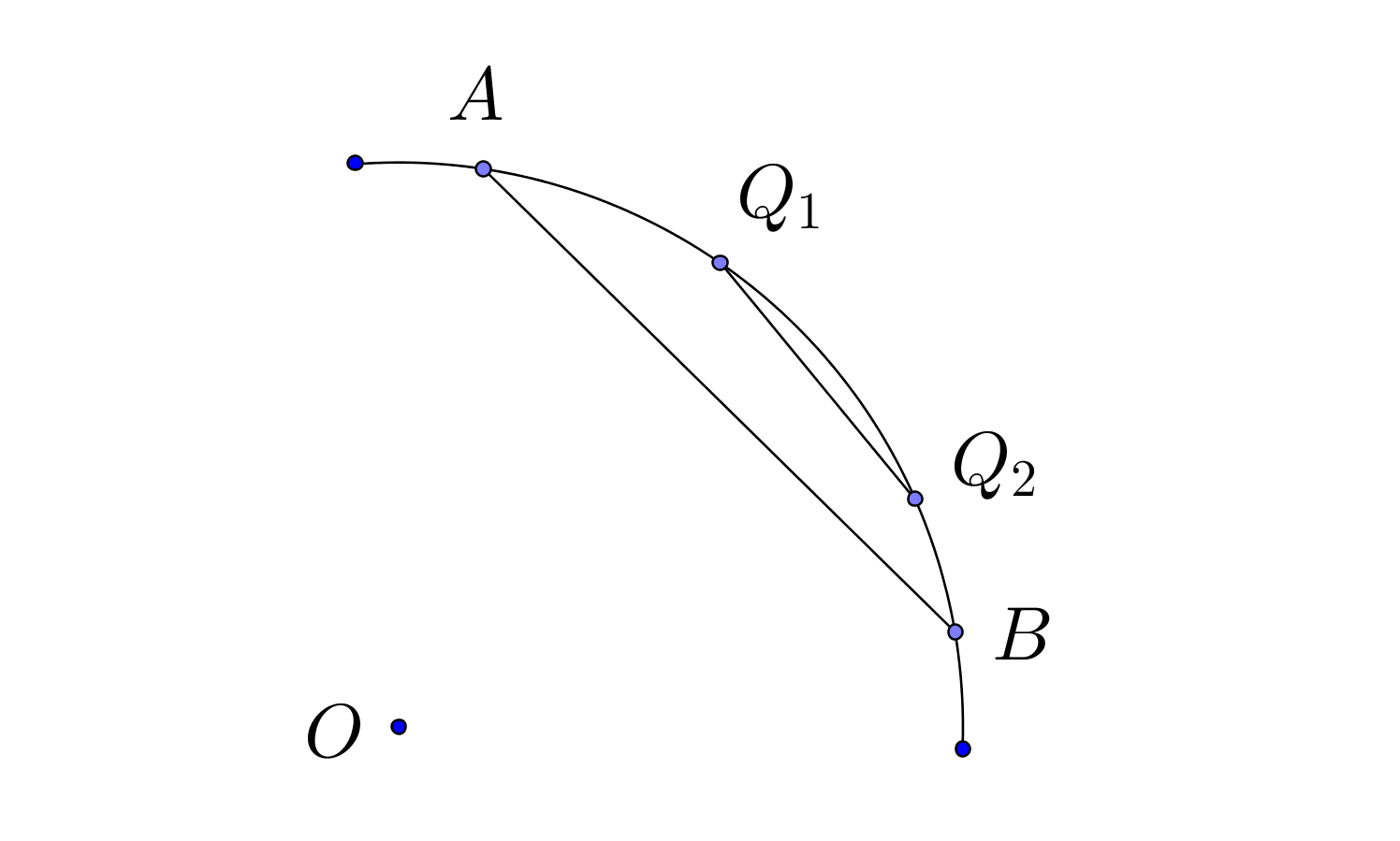}
\caption{\label{fig5} $|Q_1Q_2|\le|AB|$.}
\end{figure}

Una vez que hayamos definido la longitud de $\arc{AB}$ y el \'area de $S(A,B)$, el contenido de la siguiente proposici\'on permitir\'a, entre otras cosas, probar la relaci\'on de proporcionalidad que existe entre estas dos magnitudes.

\begin{prop}
\begin{itemize}
\item Sean $A=P_0>P_1>P_2>\cdots>P_n=B$ puntos en $\mathcal{C}$ (usando la noci\'on de orden fijada en la demostraci\'on del Lema 2). 
\item Sean $A'$ y $B'$ los puntos obtenidos al intersectar la recta tangente a $\mathcal{C}$ en $P$ (siendo $P$ el punto del Lema 1) con las rectas $OA$ y $OB$, respectivamente.
\item Para $i\in \{1,\ldots,n\}$ sea $T_i$ el tri\'angulo $\triangle P_{i-1}OP_i$, y sea $T_{ext}$ el tri\'angulo $\triangle A'OB'$.
\end{itemize}
Si $h_i$ es la altura en $O$ de $T_i$ y $\ell_i=|P_{i-1}P_i|$ para $1\le i\le n$, $\ell^{(0)}=|AB|$, y $h^{(0)}$ es la altura en $O$ de $\triangle AOB$, entonces
$$\ell^{(0)}\le\sum_{i=1}^n\ell_i\le\frac{2\text{\'area}(\bigcup T_i)}{\displaystyle\min_{1\le i\le n}h_i}\le\frac{2\text{\'area}(T_{ext})}{\displaystyle\min_{1\le i\le n}h_i}=\frac{\ell^{(0)}}{h^{(0)}\displaystyle\min_{1\le i\le n}h_i}.$$
\end{prop}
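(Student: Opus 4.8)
The plan is to prove the chain from left to right, handling the three inequalities and the closing identity in turn; once the geometric picture is fixed, no link needs a serious computation. The first link, $\ell^{(0)}\le\sum_{i=1}^{n}\ell_i$, is the iterated triangle inequality applied to the polygonal $P_0,P_1,\dots,P_n$: $\ell^{(0)}=|AB|=|P_0P_n|\le\sum_{i=1}^{n}|P_{i-1}P_i|=\sum_{i=1}^{n}\ell_i$. For the second link I would observe that $T_i=\triangle P_{i-1}OP_i$ has the side $P_{i-1}P_i$ (length $\ell_i$) as a base and $h_i$ as the corresponding altitude from $O$, so $\text{\'area}(T_i)=\tfrac12\ell_i h_i\ge\tfrac12\ell_i\min_{j}h_j$; summing over $i$ and using that the $T_i$ overlap only along shared edges $OP_i$ (or the common vertex $O$), we get $\text{\'area}(\bigcup T_i)=\sum_i\text{\'area}(T_i)\ge\tfrac12\bigl(\min_j h_j\bigr)\sum_i\ell_i$, which rearranges into the stated bound. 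All the numbers $h_i$ and $h^{(0)}$ are strictly positive, since $O$ is not collinear with any two distinct points of a quarter circle, so nothing is divided by zero.

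The third link, which amounts to $\text{\'area}(\bigcup T_i)\le\text{\'area}(T_{ext})$, is the step I expect to cost the most care; it rests on pinning down $T_{ext}=\triangle A'OB'$ and on the two preceding lemmas. I would describe $T_{ext}$ as the intersection of three half-planes: the side of line $OA$ containing $B$, the side of line $OB$ containing $A$, and the side of the tangent line $\ell$ to $\mathcal C$ at $P$ containing $O$; indeed the three sides of $T_{ext}$ are a piece of line $OA$, a piece of line $OB$, and the segment $A'B'\subset\ell$, with $A$ between $O$ and $A'$ and $B$ between $O$ and $B'$ because $A',B'$ lie on $\ell$ and $\ell$ is at distance $|OP|=1=|OA|=|OB|$ from $O$. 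Every $P_i$ lies in all three half-planes: it lies on the $O$-side of $\ell$ by the last assertion of Lemma 1 (all of $\mathcal C$ does); and since $A=P_0>P_1>\dots>P_n=B$ in the order of Lemma 2, the ordinate $y_i$ of $P_i$ satisfies $y_b<y_i<y_a$, which through the elementary inequality $y_b<y_i<y_a\Rightarrow\sqrt{1-y_a^2}\,y_i-y_a\sqrt{1-y_i^2}<0$ (compare squares) and its mirror image puts $P_i$ on the correct side of each of lines $OA$ and $OB$. Hence $P_i\in T_{ext}$ for all $i$, and since $O$ is a vertex of the convex set $T_{ext}$ as well, every triangle $T_i=\triangle P_{i-1}OP_i$ has all three vertices in $T_{ext}$, so $T_i\subseteq T_{ext}$; thus $\bigcup T_i\subseteq T_{ext}$ and monotonicity of area gives the inequality.

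It remains to prove the identity $2\,\text{\'area}(T_{ext})=\ell^{(0)}/h^{(0)}$, which after dividing by $\min_i h_i$ is the displayed equality. From the proof of Lemma 1, $OP$ bisects $\angle AOB$; since $\triangle AOB$ is isosceles with $|OA|=|OB|$, the bisector from $O$ is perpendicular to $AB$ and meets it at the foot $Q$ of the altitude from $O$, so $|OQ|=h^{(0)}$. Hence $AB$ and $A'B'$ are both perpendicular to $OP$, so $AB\parallel A'B'$ and $\triangle OAB\sim\triangle OA'B'$ with ratio $|OP|/|OQ|=1/h^{(0)}$; in particular $|A'B'|=|AB|/h^{(0)}$. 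Since $\ell\perp OP$ at $P$ with $|OP|=1$, the line $A'B'=\ell$ is at distance $1$ from $O$, so $\text{\'area}(T_{ext})=\tfrac12|A'B'|\cdot 1=\tfrac12\,|AB|/h^{(0)}=\tfrac12\,\ell^{(0)}/h^{(0)}$, which is exactly the needed identity and closes the chain.
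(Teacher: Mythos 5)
Your proof is correct and follows essentially the same route as the paper's: the iterated triangle inequality (which the paper phrases as an induction) for the first link, the base-times-height area formula plus disjointness of the $T_i$ for the second, containment of $\bigcup T_i$ in $T_{ext}$ for the third, and Thales together with the perpendicularity of the tangent $A'B'$ to the radius $OP$ for the closing identity. The only noticeable difference is that for the third link you verify $\bigcup T_i\subseteq T_{ext}$ directly via a half-plane description of $T_{ext}$ and convexity, whereas the paper passes through the intermediate inclusion $\bigcup T_i\subseteq S(A,B)\subseteq T_{ext}$; both ultimately rest on the last assertion of Lemma 1.
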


\begin{figure}[htbp!]
\centering
\includegraphics[scale=0.75]{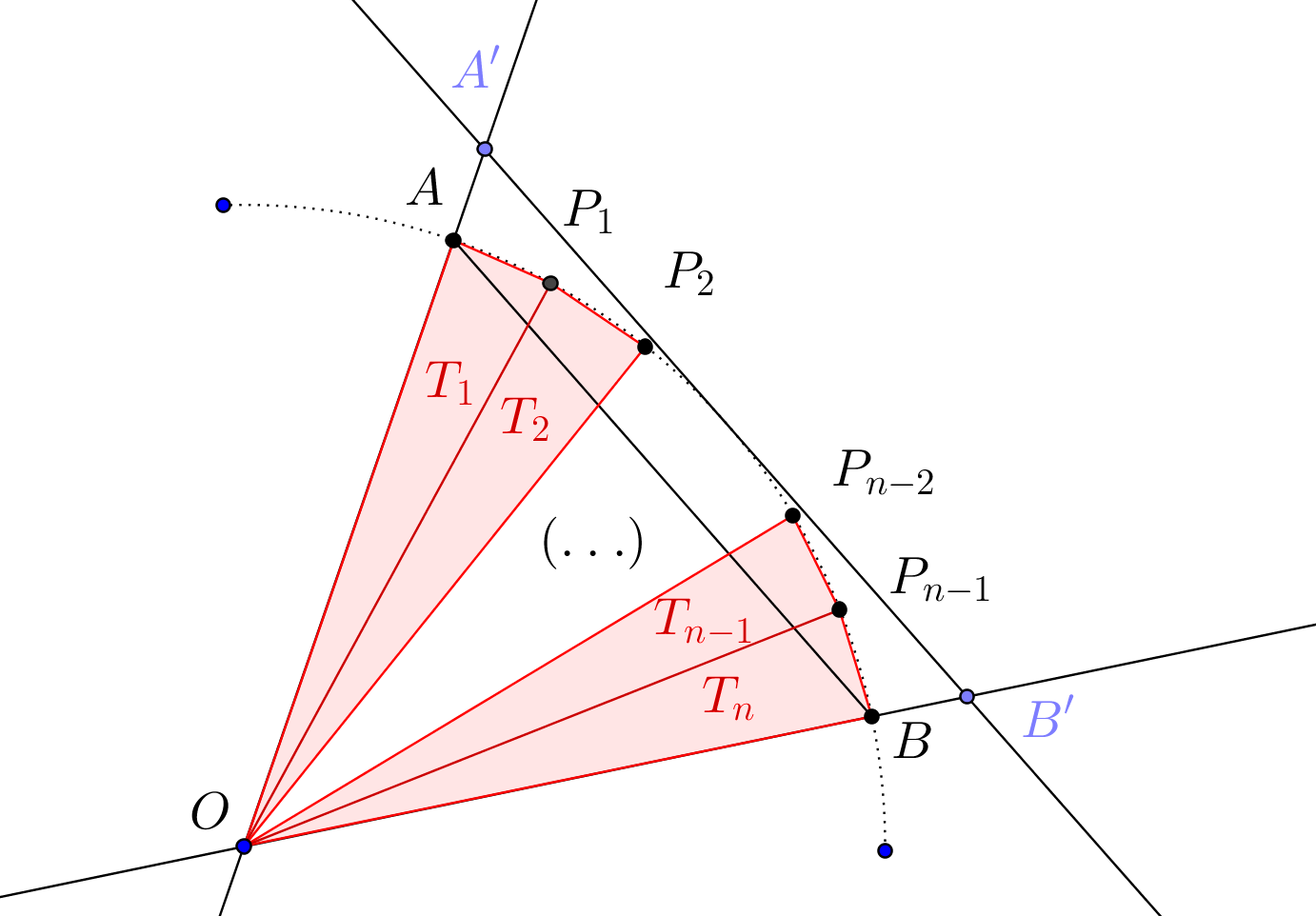}
\caption{\label{fig6} Elementos de la Proposici\'on 1.}
\end{figure}

\begin{proof}
\textbf{Primera desigualdad.} Cuando $n=2$ la desigualdad vale por desigualdad triangular:
$$\ell^{(0)}=|AB|=|AP_2|\le |AP_1|+|P_1P_2|=\ell_1+\ell_2.$$
Adoptemos ahora la hip\'otesis de que la desigualdad vale para alg\'un $n$ natural , es decir, supongamos que para cualquier colecci\'on de $n$ puntos $P_i\in\mathcal{C}$ tales que $A>P_1>\cdots>P_n=B$ se tiene $\ell^{(0)}\le\sum_{i=1}^n\ell_i$, y consideremos $n+1$ puntos en $\mathcal{C}$ entre $A$ y $B$; denot\'emoslos por $Q_1,Q_2,\ldots ,Q_{n+1}$. Consideremos uno de los puntos no extremos, digamos, el punto $Q_n$, ubicado entre $Q_{n-1}$ y $Q_{n+1}$. La desigualdad triangular asegura que $|Q_{n-1}Q_{n+1}|\le |Q_{n-1}Q_n|+|Q_nQ_{n+1}|$, con lo cual
$$\sum_{i=1}^{n+1}\ell_i=|AQ_1|+|Q_1Q_2|+\cdots +|Q_{n-1}Q_n|+|Q_nQ_{n+1}|$$
$$\ge |AQ_1|+|Q_1Q_2|+\cdots +|Q_{n-1}Q_{n+1}|.$$
Los $n$ puntos $Q_1,Q_2,\ldots ,Q_{n-1},Q_{n+1}$ est\'an en $\mathcal{C}$ entre $A$ y $B$, por lo tanto podemos usar nuestra hip\'otesis para afirmar que 
$$|AQ_1|+|Q_1Q_2|+\cdots +|Q_{n-1}Q_{n+1}|\ge \ell^{(0)},$$
lo que implica que $\sum_{i=1}^{n+1}\ell_i\ge \ell^{(0)}$. Por el principio de inducci\'on se sigue que la desigualdad vale para todo $n$ natural.\\
\\
\textbf{Segunda desigualdad.} Sea $T_i$ el tri\'angulo $\triangle OP_{i-1}P_i$, donde $P_0$ representa a $A$. Sean adem\'as $h_i$ la altura en $O$ del tri\'angulo $T_i$ y $\ell_i$ la longitud de la base $P_{i-1}P_i$. Tenemos entonces
$$\text{\'area}(T_i)=\frac{h_i\ell_i}{2}\Rightarrow 2\text{\'area}(T_i)=h_i\ell_i\ge \left(\min_{1\le j\le n}h_j\right)\cdot \ell_i$$
$$\Rightarrow \ell_i\le\frac{2\text{\'area}(T_i)}{\displaystyle\min_{1\le j\le n}h_j}\Rightarrow \sum_{i=1}^n\ell_i\le\sum_{i=1}^n\frac{2\text{\'area}(T_i)}{\displaystyle\min_{1\le j\le n}h_j}=\frac{2}{\displaystyle\min_{1\le j\le n}h_j}\sum_{i=1}^n\text{\'area}(T_i)=\frac{2}{\displaystyle\min_{1\le j\le n}h_j}\text{\'area}\left(\bigcup T_i\right),$$
donde la \'ultima igualdad se justifica con el hecho de que los $T_i$ son disjuntos.\\
\\
\textbf{Tercera desigualdad.} Todos los $T_i$ est\'an contenidos en el sector circular $S(A,B)$, el cual a su vez est\'a contenido en $T_{ext}$ seg\'un la \'ultima parte del Lema $1$. De aqu\'i necesariamente 
$$\text{\'area}\left(\bigcup T_i\right)\le \text{\'area}(T_{ext})\Rightarrow \frac{2\text{\'area}\left(\bigcup T_i\right)}{\displaystyle\min_{1\le i\le n}h_i}\le\frac{2\text{\'area}(T_{ext})}{\displaystyle\min_{1\le i\le n}h_i}.$$
\\
\textbf{La igualdad.} Para probar que $\displaystyle \frac{2\text{\'area}(T_{ext})}{\displaystyle\min_{1\le i\le n}h_i}= \frac{\ell^{(0)}}{h^{(0)}\displaystyle\min_{1\le i\le n}h_i}$ basta usar el teorema de Thales sobre los tri\'angulos $\triangle AOB$ y $\triangle A'OB'$:
\begin{align}\label{asdasd}\frac{|A'B'|}{\ell^{(0)}}=\frac{1}{h^{(0)}}\Rightarrow |A'B'|=\frac{\ell^{(0)}}{h^{(0)}}.\end{align}
Como $OP$ es radio y $A'B'$ es tangente a $\mathcal C$, ambos segmentos deben ser perpendiculares, gracias a lo cual podemos tomar $OP$ como altura de $T_{ext}$ y as\'i calcular su \'area usando la ecuaci\'on \eqref{asdasd} y el hecho de que $|OP|=1$:
$$\text{\'area}(T_{ext})=\frac{|A'B'|\cdot|OP|}{2}=\frac{|A'B'|}{2}=\frac{\ell^{(0)}}{2h^{(0)}}\Rightarrow \frac{2\text{\'area}(T_{ext})}{\displaystyle\min_{1\le i\le n}h_i}=\frac{2\cdot\frac{\ell^{(0)}}{2h^{(0)}}}{\displaystyle\min_{1\le i\le n}h_i}=\frac{\ell^{(0)}}{h^{(0)}\displaystyle\min_{1\le i\le n}h_i}.$$
\end{proof}

\subsection{Definici\'on de $\stackrel{\textstyle\frown}{\mathrm{AB}}$ a partir de  poligonales}

Para definir con precisi\'on la longitud del arco $\arc{AB}$,
necesitamos tomar una decisi\'on respecto de la naturaleza que queremos asignarle  (tal como tendremos que hacerlo m\'as adelante al definir el \'area de $S(A,B)$).
Siguiendo nuestra intuici\'on, describiremos $\arc{AB}$
a partir de  una sucesi\'on de poligonales. Este procedimiento entrega la materia prima que usaremos tambi\'en en la secci\'on siguiente para construir pol\'igonos adecuados al prop\'osito de definir el \'area del sector circular $S(A,B)$.\\
\\
Dados $A,B\in\mathcal{C}$, sea $\ell^{(0)}:=|AB|$ y sea $h^{(0)}$ la altura en $O$ del tri\'angulo $\triangle AOB$ (como en la Proposici\'on 1). Sea $P$ un punto en $\mathcal{C}$ tal que $|AP|=|PB|\le\frac{|AB|}{\sqrt{2}}$ (afirmamos que $P$ existe y verifica esta desigualdad gracias al Lema 1). La poligonal formada al dibujar los trazos $\overline{AP}$ y $\overline{PB}$ consiste de dos segmentos de longitud $\ell^{(1)}:=|AP|\le\frac{|AB|}{\sqrt{2}}$, y tiene una longitud total igual a $L_1:=2\ell^{(1)}$. Tomando ahora puntos $R,S\in\mathcal{C}$ tales que $|AR|=|RP|$ y $|PS|=|SB|$, y dibujando nuevamente trazos rectos entre puntos consecutivos, obtenemos una nueva poligonal formada por cuatro segmentos de longitud $\ell^{(2)}:=|AR|\le\frac{|AP|}{\sqrt{2}}\le\frac{|AB|}{\sqrt{2}^2}$, y longitud total igual a $L_2:=4\ell^{(2)}$. Procediendo inductivamente, en la $m$-\'esima iteraci\'on de este proceso contamos con una colecci\'on de puntos $\{A=P_0,P_1,P_2,\ldots,P_{2^m}=B\}$, y con ello una poligonal formada por $2^m$ segmentos de longitud $\ell^{(m)}$ verificando $\ell^{(m)}\le\frac{|AB|}{\sqrt{2}^m}$, y longitud total igual a $L_m:=2^m\ell^{(m)}$. Obtenemos adem\'as $2^m$ tri\'angulos $T_i=\triangle P_{i-1}OP_i$ congruentes y disjuntos entre s\'i; llamemos $h^{(m)}$ a la altura en $O$ de cualquiera de ellos.\\
\\
Naturalmente, nos interesamos en las poligonales formadas porque constituyen una v\'ia razonable de acercamiento a la naturaleza del arco $\arc{AB}$: decidimos que la longitud de arco de $\arc{AB}$, de ser definida, debe corresponder a una magnitud \'intimamente relacionada a la sucesi\'on de longitudes $\{L_m\}_{m\in\mathbb{N}}$ descrita arriba. Cabe notar adem\'as que estamos tomando como punto de partida la siguiente afirmaci\'on: ``la longitud de una poligonal es igual a la suma de las longitudes de los segmentos rectos que la forman''.

\begin{prop}
La sucesi\'on de longitudes $\{L_m\}$ ya descrita es creciente y acotada superiormente.
\end{prop}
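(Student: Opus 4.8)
El plan es establecer por separado el crecimiento y la acotaci\'on, ambos como consecuencias breves de la construcci\'on y de la Proposici\'on 1. Para el crecimiento, recordemos que la poligonal de la etapa $m+1$ se obtiene de la de la etapa $m$ reemplazando cada una de sus $2^m$ aristas $P_{i-1}P_i$, todas de longitud $\ell^{(m)}$, por las dos aristas $P_{i-1}M_i$ y $M_iP_i$, donde $M_i\in\mathcal C$ cumple $|P_{i-1}M_i|=|M_iP_i|=\ell^{(m+1)}$. La desigualdad triangular da $\ell^{(m)}=|P_{i-1}P_i|\le |P_{i-1}M_i|+|M_iP_i|=2\,\ell^{(m+1)}$; multiplicando por $2^m$ se obtiene $L_m=2^m\ell^{(m)}\le 2^{m+1}\ell^{(m+1)}=L_{m+1}$, de modo que $\{L_m\}$ es creciente. (Tambi\'en podr\'ia invocarse la primera desigualdad de la Proposici\'on 1 aplicada a los extremos de una sola arista y a su punto medio, pero basta la desigualdad triangular.)

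Para la cota superior, fijemos $m$ y apliquemos la Proposici\'on 1 a los puntos $A=P_0>P_1>\cdots>P_{2^m}=B$ obtenidos en la etapa $m$ (aparecen en el orden requerido porque, seg\'un el Lema 1, cada punto insertado tiene ordenada estrictamente entre las de sus vecinos). Todos los tri\'angulos $T_i=\triangle P_{i-1}OP_i$ son congruentes, luego $\ell_i=\ell^{(m)}$ y $h_i=h^{(m)}$ para todo $i$, y por ende $\min_{1\le i\le 2^m}h_i=h^{(m)}$; la cadena de desigualdades de la Proposici\'on 1 se reduce entonces a $L_m=\sum_{i=1}^{2^m}\ell_i\le \dfrac{\ell^{(0)}}{h^{(0)}\,h^{(m)}}$. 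Resta acotar $h^{(m)}$ por debajo por una constante positiva independiente de $m$. Cada $T_i$ es is\'osceles con los dos lados iguales de longitud $1$ (radios), as\'i que, por el mismo argumento de congruencia usado en la prueba del Lema 1, el pie de su altura en $O$ es el punto medio de la base $P_{i-1}P_i$; el teorema de Pit\'agoras da entonces $h^{(m)}=\sqrt{1-(\ell^{(m)}/2)^2}$. Como $A,B\in\mathcal C$ est\'an en el primer cuadrante, $\ell^{(m)}\le \ell^{(0)}=|AB|\le\sqrt2$ (la distancia m\'axima entre dos puntos del cuarto de circunferencia, la de $(1,0)$ a $(0,1)$), de donde $h^{(m)}\ge\sqrt{1-1/2}=1/\sqrt2$ para todo $m$. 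Por lo tanto $L_m\le \dfrac{\sqrt2\,\ell^{(0)}}{h^{(0)}}$ para todo $m$ (con $h^{(0)}>0$ pues $O$, $A$, $B$ no son colineales), que es la cota uniforme buscada.

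El \'unico paso que no es puramente formal es esta cota inferior uniforme para $h^{(m)}$; todo lo dem\'as es la desigualdad triangular y una sustituci\'on directa en la Proposici\'on 1. Dicha cota descansa \'unicamente en que la altura desde el v\'ertice opuesto a la base de un tri\'angulo is\'osceles biseca a esa base, hecho que el texto ya demostr\'o, en exactamente esta configuraci\'on, dentro de la demostraci\'on del Lema 1, de modo que no hace falta geometr\'ia nueva. Una alternativa es no calcular $h^{(m)}$ de forma exacta y observar simplemente que $\ell^{(m)}$ es decreciente (de hecho $\ell^{(m+1)}\le\ell^{(m)}/\sqrt2$), con lo cual $\{h^{(m)}\}_{m\ge 1}$ es creciente y $h^{(m)}\ge h^{(1)}>0$; cualquiera de las dos v\'ias cierra el argumento.
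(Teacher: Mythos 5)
Tu demostraci\'on es correcta y sigue esencialmente el mismo camino que la del texto: la monoton\'ia se reduce a la desigualdad triangular (que es exactamente la primera desigualdad de la Proposici\'on 1 con dos segmentos) y la acotaci\'on a la desigualdad $L_m\le \ell^{(0)}/(h^{(0)}h^{(m)})$ de la Proposici\'on 1 junto con una cota inferior positiva y uniforme para $h^{(m)}$. La \'unica diferencia es el \'ultimo paso: t\'u acotas $h^{(m)}\ge 1/\sqrt{2}$ usando $\ell^{(m)}\le\sqrt{2}$, mientras que el texto prueba $h^{(m)}\ge h^{(0)}$ (v\'ia el Lema 2 y Pit\'agoras) y obtiene la cota $\ell^{(0)}/\bigl(h^{(0)}\bigr)^2$, que es la que luego se reutiliza en otras partes del art\'iculo; para la proposici\'on en s\'i ambas cotas son igualmente v\'alidas.
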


\begin{proof}
Sea $m\in\mathbb N$ arbitrario. En la $m$-\'esima iteraci\'on tenemos $2^m$ segmentos de longitud $\ell^{(m)}$ y la misma cantidad de tri\'angulos congruentes, con altura en $O$ igual a $h^{(m)}$. En t\'erminos de la Proposici\'on 1, podemos observar que $\ell_1=\ell_2=\ldots=\ell_{2^m}=\ell^{(m)}$ y que $h_1=h_2=\ldots=h_{2^m}=h^{(m)}$. Esta \'ultima nos dice que 
$$\sum_{i=1}^{2^m}\ell_i\le\frac{\ell^{(0)}}{h^{(0)}\displaystyle\min_{1\le i\le 2^m}h_i},$$
lo cual implica, por la observaci\'on que acabamos de hacer, que

\begin{align}\label{1}2^m\ell^{(m)}\le\frac{\ell^{(0)}}{h^{(0)}h^{(m)}}.\end{align}

Por una parte, el miembro de la izquierda corresponde a $L_m$, y por otra, el de la derecha es menor que $\displaystyle\frac{\ell^{(0)}}{\left(h^{(0)}\right)^2}$. En efecto, el Lema 2 asegura que $\ell^{(m)}\le \ell^{(0)}$, de modo que $\displaystyle\sqrt{1-\left(\frac{\ell^{(0)}}{2}\right)^2}\le\sqrt{1-\left(\frac{\ell^{(m)}}{2}\right)^2}$. Ahora, al aplicar el Teorema de Pit\'agoras sobre los tri\'angulos de la Figura \ref{fig7} nos damos cuenta de que la desigualdad anterior dice precisamente que $h^{(0)}\le h^{(m)}$, lo que confirma lo que afirmamos arriba, a saber,
$$\frac{\ell^{(0)}}{h^{(0)}h^{(m)}}\le\frac{\ell^{(0)}}{\left(h^{(0)}\right)^2}.$$
\begin{figure}[htbp!]
\centering
\includegraphics[scale=0.9]{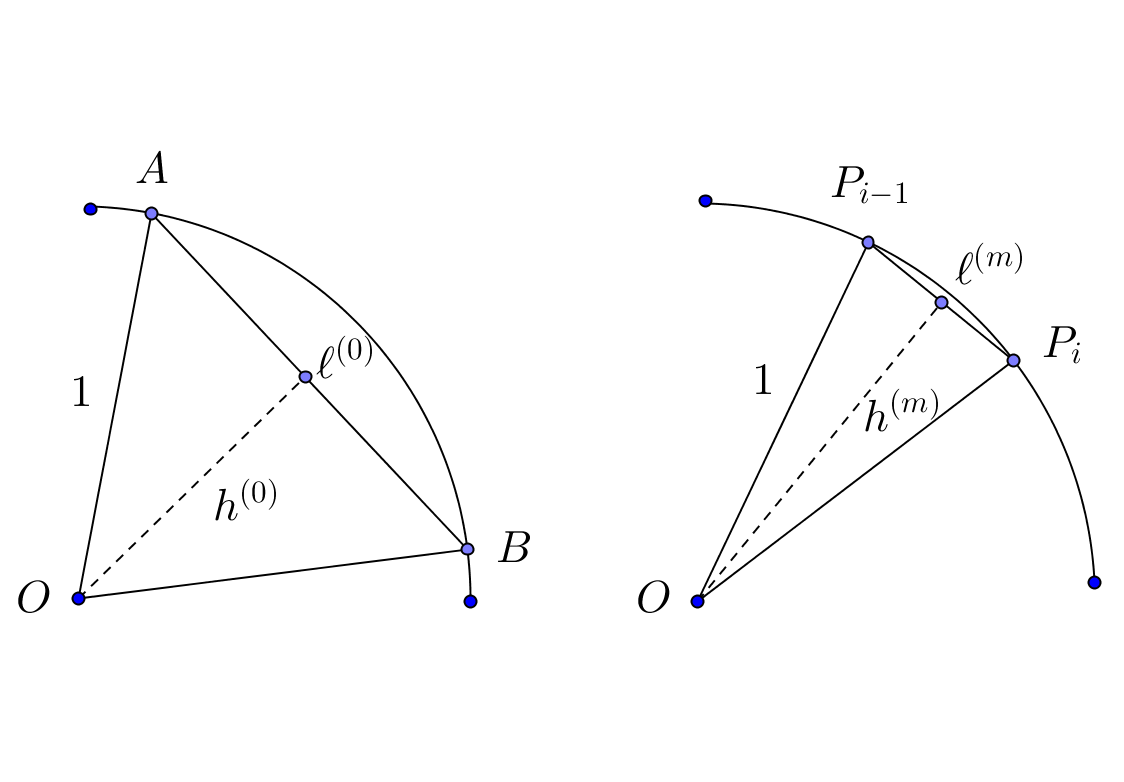}
\caption{\label{fig7} $\displaystyle\left(h^{(0)}\right)^2+\left(\frac{\ell^{(0)}}{2}\right)^2=1
\ \ \text{y}\ \ \left(h^{(m)}\right)^2+\left(\frac{\ell^{(m)}}{2}\right)^2=1.$}
\end{figure}

\noindent Al recopilar todo esto en la relaci\'on $\eqref{1}$ concluimos finalmente que
\begin{align}\label{cota}L_m\le\frac{\ell^{(0)}}{\left(h^{(0)}\right)^2},\ \ \ \forall m\in\mathbb{N},\end{align}

\noindent lo que prueba que $\displaystyle\frac{\ell^{(0)}}{\left(h^{(0)}\right)^2}$ es una cota superior de $\{L_m\}$.\\
\\
El hecho de que $\{L_m\}$ es creciente se sigue de la Proposici\'on 1. Tomando $\ell^{(m)}$ en el lugar de $\ell^{(0)}$, y observando que la poligonal $m+1$--\'esima se obtiene tomando un punto medio entre cada par de puntos consecutivos de la poligonal $m$-\'esima, podemos notar lo siguiente: sobre cada segmento de longitud $\ell^{(m)}$ del paso $m$ aparecen dos segmentos de longitud $\ell^{(m+1)}$ en el paso $m+1$. As\'i, la primera desigualdad de la Proposici\'on 1 dice que
$$\ell^{(m)}\le\sum_{i=1}^2\ell^{(m+1)},$$
de donde se desprende (sumando a ambos lados) que
$$L_m=\sum_{j=1}^{2^m}\left[\ell^{(m)}\right]\le\sum_{j=1}^{2^m}\left[\sum_{i=1}^2\ell^{(m+1)}\right]=\sum_{j=1}^{2^m}2\ell^{(m+1)}$$
$$= 2^m\cdot2\ell^{(m+1)}=2^{m+1}\ell^{(m+1)}=L_{m+1}.$$
\end{proof}

En virtud del axioma del supremo, se sigue de la proposici\'on anterior que la sucesi\'on $\{L_m\}$ es convergente, dado lo cual definimos la longitud del arco $\arc{AB}$ como
$$|\arc{AB}|:=\lim_{m\to\infty}L_m.$$

\subsection{Definici\'on del \'area de  $S(A,B)$ a partir de  poligonales}

Nos enfrentamos ahora a la misi\'on de definir el \'area de $S(A,B)$. 
Antes de continuar aclararemos que, en adelante, 
usaremos la notaci\'on $|\cdot |$ para representar tanto \'areas como longitudes, debiendo el lector realizar la interpretaci\'on correcta dependiendo del contexto. Por ejemplo, al hablar de $|\triangle OPQ|$ nos referimos al \'area del tri\'angulo con v\'ertices $O$, $P$ y $Q$, mientras que por $|PQ|$ nos referimos a la longitud del segmento $PQ$.\\
\\
Aunque el objetivo de definir el \'area de una figura parece intuitivamente sencillo, definir rigurosamente la \textit{medida} de un subconjunto de $\mathbb{R}^2$ es una tarea que requiere de un trabajo minucioso, y corresponde a un objetivo de la \textit{teor\'ia de la medida}. Como no es nuestro prop\'osito entrar en los detalles que esta teor\'ia aborda (el lector interesado puede consultar, por ejemplo, \cite[cap\'itulos 1 y 2]{Hen}), s\'olo mencionaremos que hay ciertos subconjuntos de $\mathbb{R}^2$ que no se pueden medir (no se les puede asignar un \'area), y por lo tanto necesitamos criterios que nos permitan decidir cu\'ando un conjunto dado es medible. De acuerdo a esto, tomaremos las siguientes premisas como punto de partida:

\begin{itemize}
\item Los pol\'igonos en $\mathbb{R}^2$ son medibles, y el \'area de cualquier pol\'igono es un real no negativo; asumiremos que el \'area de un tri\'angulo cualquiera viene dada por la cl\'asica expresi\'on $\frac{\text{base}\cdot\text{altura}}{2}$, y que $\left|\bigcup_{k=1}^nS_k\right|=\sum_{k=1}^n|S_k|$ cuando $\{S_k\}_{k=1}^n$ es una familia de pol\'igonos disjuntos en $\mathbb{R}^2$.
\item Si un conjunto $S\subset\mathbb{R}^2$ es tal que para cualquier $\varepsilon>0$ existen dos pol\'igonos $\Sigma$ y $\Sigma'$ en $\mathbb{R}^2$ verificando $$\Sigma\subset S\subset \Sigma'\text{\ \ \ \ y\ \ \ \ \ }|\Sigma'|-|\Sigma|<\varepsilon,$$
entonces $S$ es medible y su \'area puede definirse como el \'infimo de las \'areas de los pol\'igonos que lo contienen.
\end{itemize}

\noindent De acuerdo a esto, antes de definir el \'area de $S(A,B)$ necesitamos probar que esta regi\'on es medible. Ahora bien, probar esto \'ultimo en t\'erminos de las premisas de arriba se traduce en tomar $S=S(A,B)$ y luego hallar dos pol\'igonos $\Sigma$ y $\Sigma'$ que verifiquen las hip\'otesis de la segunda premisa. Para lograr esto usaremos el procedimiento descrito al comienzo de esta subsecci\'on de la siguiente manera.\\
\\
En la $m$-\'esima iteraci\'on de nuestro procedimiento de obtenci\'on de puntos y poligonales, definamos $\Sigma_m$ como el pol\'igono de v\'ertices $OP_0P_1P_2\ldots P_{2^m}$. Este est\'a contenido en $S(A,B)$ y consiste en la uni\'on disjunta de los tri\'angulos $T_i$ de la Proposici\'on 1 para $1\le i\le 2^m$, es decir, $\displaystyle\Sigma_m=\bigcup_{i=1}^{2^m}T_i$. Adem\'as se tiene que $|\Sigma_m|=\displaystyle\sum_{i=1}^{2^m}|T_i|$.\\
\\
Ahora (usaremos notaci\'on nueva con el \'unico fin de definir un pol\'igono exterior; olvidarse de los siguientes elementos despu\'es no supondr\'a problemas), para cada $i\in \{1,\ldots,2^m\}$ sea $M_i$ un punto en $\mathcal{C}$ tal que $|P_{i-1}M_i|=|M_iP_i|$, y sea $\gamma_i$ la recta tangente a $\mathcal{C}$ en $M_i$. Sean ahora $P_{i-1}'$ y $P_i'$ los puntos de intersecci\'on de $\gamma_i$ con las prolongaciones de las rectas $\overline{OP_{i-1}}$ y $\overline{OP_i}$ respectivamente.\\

\begin{figure}[htbp!]
\centering
\includegraphics[scale=0.65]{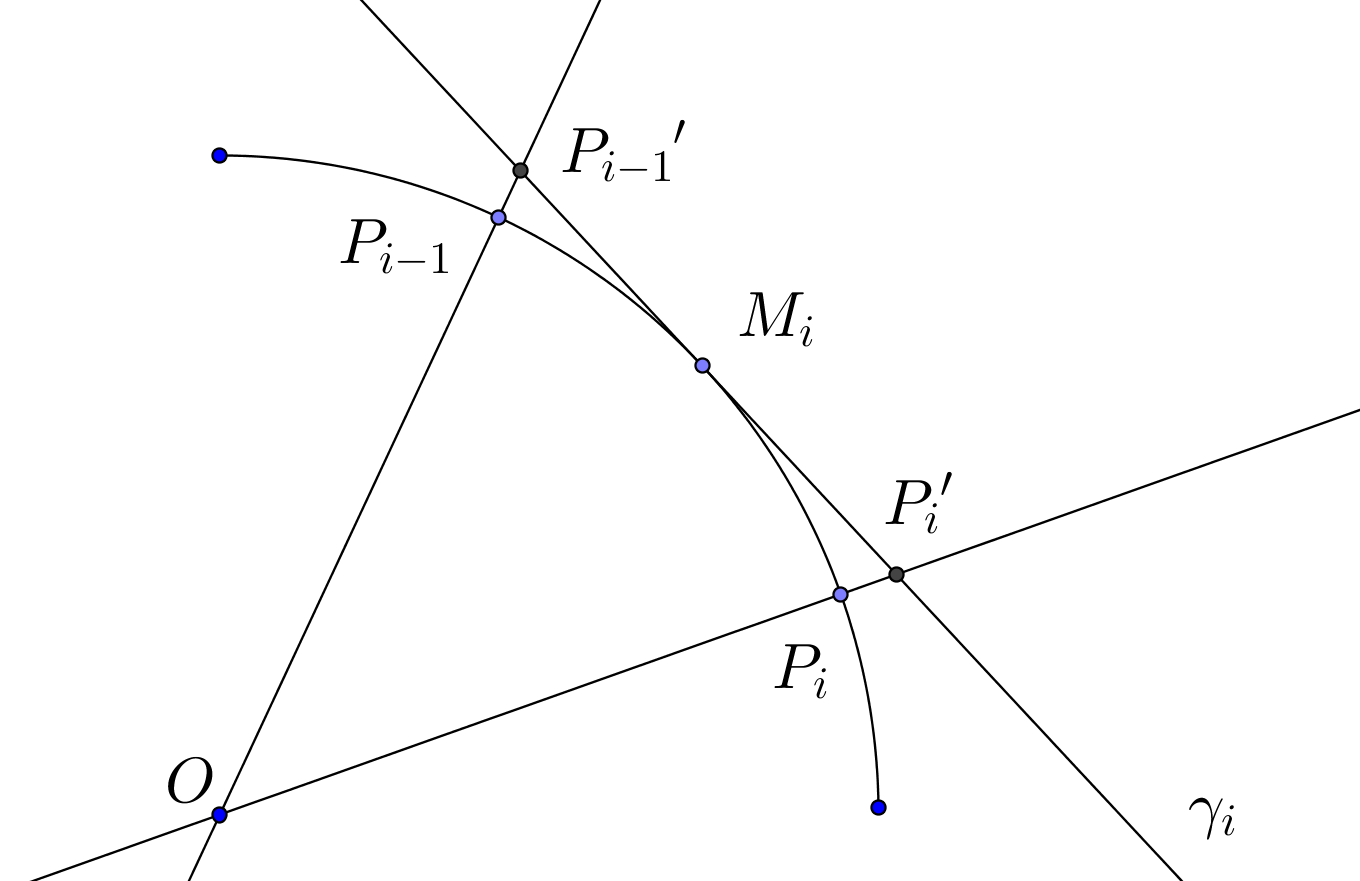}
\caption{\label{fig8} V\'ertices para un pol\'igono exterior.}
\end{figure}

\noindent Si escribimos $\triangle P_{i-1}'OP_i'=T_i'$, entonces definimos $\Sigma_m':=\bigcup T_i'$. De manera an\'aloga a $\Sigma_m$, el pol\'igono $\Sigma_m'$ contiene a $S(A,B)$ (el Lema 1 asegura que el arco con extremos $P_{i-1}$ y $P_i$ queda por debajo del segmento $\overline{P_{i-1}'P_i'}$) y adem\'as por construcci\'on $|\Sigma_m'|=\displaystyle\sum_{i=1}^{2^m}|T_i'|$.\\
\\
Lo primero que observamos es que $\Sigma_m\subset S(A,B)\subset\Sigma_m'$ para todo $m\in\mathbb{N}$, es decir, se cumple una de las hip\'otesis de la segunda premisa. Ahora solamente falta hallar alg\'un $m_0$ tal que $|\Sigma_{m_0}'|-|\Sigma_{m_0}|<\varepsilon$ para cualquier $\varepsilon>0$ dado con anterioridad.

\begin{prop}
Sean $\ell^{(m)},\ell^{(0)}$, y $h^{(0)}$ las magnitudes descritas al comienzo de esta subsecci\'on. Dado $\varepsilon>0$ existe $m_0\in\mathbb{N}$ tal que si $m\ge m_0$ entonces
\begin{align}
\label{alturas}\frac{1}{1-\left(\frac{\ell^{(m)}}{2}\right)^2}<1+\frac{2\varepsilon \left(h^{(0)}\right)^2}{\ell^{(0)}}.
\end{align}
\end{prop}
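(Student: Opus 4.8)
The idea is simply that the right--hand side of \eqref{alturas} is a fixed constant strictly greater than $1$, while the left--hand side tends to $1$ as $m\to\infty$ because $\ell^{(m)}\to 0$. To make this precise, set $\delta:=\dfrac{2\varepsilon\left(h^{(0)}\right)^2}{\ell^{(0)}}$. Since we are working with $A\neq B$, we have $\ell^{(0)}=|AB|>0$ and $h^{(0)}>0$, so $\delta>0$ and the right--hand side of \eqref{alturas} equals $1+\delta>1$. On the other hand, recall from subsection 1.2 that $\ell^{(m)}\le\dfrac{|AB|}{\sqrt{2}^{\,m}}=\dfrac{\ell^{(0)}}{\sqrt{2}^{\,m}}$; combined with the fact that $\ell^{(0)}\le\sqrt{2}$ (the largest chord of $\mathcal{C}$ joins $(1,0)$ with $(0,1)$), this gives $0<\ell^{(m)}\le\sqrt{2}<2$, so the denominator $1-\left(\tfrac{\ell^{(m)}}{2}\right)^2$ is strictly positive for every $m$.

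First I would turn \eqref{alturas} into an equivalent condition on $\ell^{(m)}$ alone. Multiplying through by the positive quantity $1-\left(\tfrac{\ell^{(m)}}{2}\right)^2$ and rearranging, \eqref{alturas} holds if and only if $(1+\delta)\left(\tfrac{\ell^{(m)}}{2}\right)^2<\delta$, that is, if and only if $\left(\ell^{(m)}\right)^2<\dfrac{4\delta}{1+\delta}$. Now I would use the upper bound $\left(\ell^{(m)}\right)^2\le\dfrac{\left(\ell^{(0)}\right)^2}{2^m}$, so it is enough to choose $m_0$ with $\dfrac{\left(\ell^{(0)}\right)^2}{2^{m_0}}<\dfrac{4\delta}{1+\delta}$, i.e. $2^{m_0}>\dfrac{\left(\ell^{(0)}\right)^2(1+\delta)}{4\delta}$. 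Such an $m_0$ exists because $2^m\to\infty$ (Archimedean property), and since $2^m$ is increasing the same inequality, hence \eqref{alturas}, holds for every $m\ge m_0$.

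The argument is entirely elementary, so there is no real obstacle; the only point that requires attention is verifying that the denominator $1-\left(\tfrac{\ell^{(m)}}{2}\right)^2$ is strictly positive, since this is what legitimizes the equivalence in the previous paragraph, and this is precisely where the bound $\ell^{(m)}\le\ell^{(0)}<2$ coming from Lemma 2 and the geometry of $\mathcal{C}$ is used. It is also worth recording that one could dispense with the explicit estimate $\ell^{(0)}\le\sqrt{2}$ and instead invoke directly that $\ell^{(m)}\to 0$, but exhibiting $m_0$ in closed form via the bound $\ell^{(m)}\le\ell^{(0)}/\sqrt{2}^{\,m}$ keeps the proof self--contained.
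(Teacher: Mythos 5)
Your proof is correct and follows essentially the same route as the paper: both arguments rest on the bound $\ell^{(m)}\le |AB|/\sqrt{2}^{\,m}$, which forces the left-hand side of \eqref{alturas} to tend to $1$ while the right-hand side is a fixed constant strictly greater than $1$. The only difference is that the paper invokes the definition of convergence abstractly, whereas you unwind it into an explicit choice of $m_0$; this is a harmless (and arguably more self-contained) quantitative restatement of the same idea.
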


Nota: El rol del t\'ermino $\frac{2\varepsilon \left(h^{(0)}\right)^2}{\ell^{(0)}}$ se reduce a representar una magnitud arbitrariamente peque\~na; no escribimos $\varepsilon$ por motivos casi est\'eticos que veremos despu\'es de la demostraci\'on.

\begin{proof}
Gracias a las primeras reflexiones de esta subsecci\'on, sabemos que $\ell^{(m)}\le\frac{|AB|}{\sqrt{2}^m}$ para cada $m\in\mathbb{N}$. Como $\frac{|AB|}{\sqrt{2}^m}\overset{m\to\infty}{\longrightarrow}0$, se sigue del teorema del sandwich que $\ell^{(m)}\overset{m\to\infty}{\longrightarrow}0$, luego $\displaystyle \frac{1}{1-\left(\frac{\ell^{(m)}}{2}\right)^2}\overset{m\to\infty}{\longrightarrow}1$ gracias a la aritm\'etica de l\'imites. Por definici\'on de convergencia, dado que $\frac{2\varepsilon \left(h^{(0)}\right)^2}{\ell^{(0)}}$ es positivo, debe existir $m_0\in\mathbb{N}$ tal que 
$$\displaystyle 1-\frac{2\varepsilon \left(h^{(0)}\right)^2}{\ell^{(0)}}<\frac{1}{1-\left(\frac{\ell^{(m)}}{2}\right)^2}<1+\frac{2\varepsilon \left(h^{(0)}\right)^2}{\ell^{(0)}},\ \ \ \forall m\ge m_0.$$
\end{proof}

Sea $\varepsilon>0$ arbitrario y sea $m_0$ el natural de la proposici\'on anterior. Afirmamos que $|\Sigma_m'|-|\Sigma_m|<\varepsilon$ siempre que $m\ge m_0$. En efecto, al calcular $|T_i|$ tomando la altura en $O$ y $|T_i'|$ como al final de la demostraci\'on de la Proposici\'on 1, vemos que
$$|\Sigma_m'|-|\Sigma_m|=\sum_{i=1}^{2^m}|T_i'|-\sum_{i=1}^{2^m}|T_i|=\sum_{i=1}^{2^m}\frac{\ell^{(m)}}{2h^{(m)}}-\sum_{i=1}^{2^m}\frac{\ell^{(m)}h^{(m)}}{2}=\frac{2^{m}\ell^{(m)}}{2h^{(m)}}-\frac{2^m\ell^{(m)}h^{(m)}}{2}$$
$$=\frac{1}{2}\cdot \underbrace{2^m\ell^{(m)}}_{=L_m}h^{(m)}\left[\frac{1}{\left(h^{(m)}\right)^2}-1\right]=\frac{1}{2}\cdot L_mh^{(m)}\left[\frac{1}{1-\left(\frac{\ell^{(m)}}{2}\right)^2}-1\right]$$
$$\underbrace{<}_{\text{desigualdad \eqref{alturas}}}\frac{L_mh^{(m)}}{2}\left[1+\frac{2\varepsilon \left(h^{(0)}\right)^2}{\ell^{(0)}}-1\right].$$
Como vimos en la desigualdad $\eqref{cota}$, $L_m\le\frac{\ell^{(0)}}{\left(h^{(0)}\right)^2}$, y adem\'as $h^{(m)}=\frac{1}{\sqrt{1-\left(\frac{\ell^{(m)}}{2}\right)^2}}<1$, por lo tanto 
$$\frac{L_mh^{(m)}}{2}\left[1+\frac{2\varepsilon \left(h^{(0)}\right)^2}{\ell^{(0)}}-1\right]<\frac{1}{2}\cdot\frac{\ell^{(0)}}{\left(h^{(0)}\right)^2}\cdot 1\left[\frac{2\varepsilon \left(h^{(0)}\right)^2}{\ell^{(0)}}\right]=\varepsilon.$$
Todo esto prueba que basta elegir cualquier $m$ mayor que $m_0$ para asegurar que $|\Sigma_m'|-|\Sigma_m|<\varepsilon$, de modo que existe un par de pol\'igonos verificando lo que quer\'iamos (de hecho, existen infinitos de ellos). La conclusi\'on consiguiente es que el sector $S(A,B)$ es medible, lo que nos da la licencia de definir su \'area como el \'infimo de las \'areas de los pol\'igonos que lo contienen:
$$|S(A,B)|:=\inf_{\Sigma}\{|\Sigma|:\Sigma\text{ es un pol\'igono que contiene a }S(A,B)\}.$$

Para concluir esta secci\'on probaremos la relaci\'on que nos permitir\'a demostrar la continuidad de la funci\'on $\arcsen$.

\begin{theo}
Dados $A,B\in\mathcal{C}$, se tiene que
$$|\arc{AB}|=2|S(A,B)|.$$
\end{theo}

\begin{proof}
La Proposici\'on 1 nos dice que
$\displaystyle\sum_{i=1}^n\ell_i\le\frac{2\left|\bigcup T_i\right|}{\displaystyle\min_{1\le i\le n}h_i}\le\frac{2\left|T_{ext}\right|}{\displaystyle\min_{1\le i\le n}h_i}=\frac{\ell^{(0)}}{h^{(0)}\displaystyle\min_{1\le i\le n}h_i}$. Sea $m_0$ el natural usado anteriormente y sea $m>m_0$ un natural cualquiera. Si consideramos las iteraciones $m_0$ y $m$ de nuestro procedimiento, tendremos (entre otras cosas) dos colecciones de tri\'angulos, una compuesta por $2^m$ tri\'angulos interiores que dan forma a $\Sigma_m$, y otra compuesta por $2^{m_0}$ tri\'angulos $T_j'$ exteriores que dan forma a $\Sigma_{m_0}'$. Llamemos $T_{ext}$ a cualquiera de los tri\'angulos de esta \'ultima colecci\'on, y llamemos $T_i$ al $i$-\'esimo de los $2^{m-m_0}$ tri\'angulos de la primera colecci\'on que est\'an contenidos en $T_{ext}$.\\

\begin{figure}[htbp!]
\centering
\includegraphics[scale=0.7]{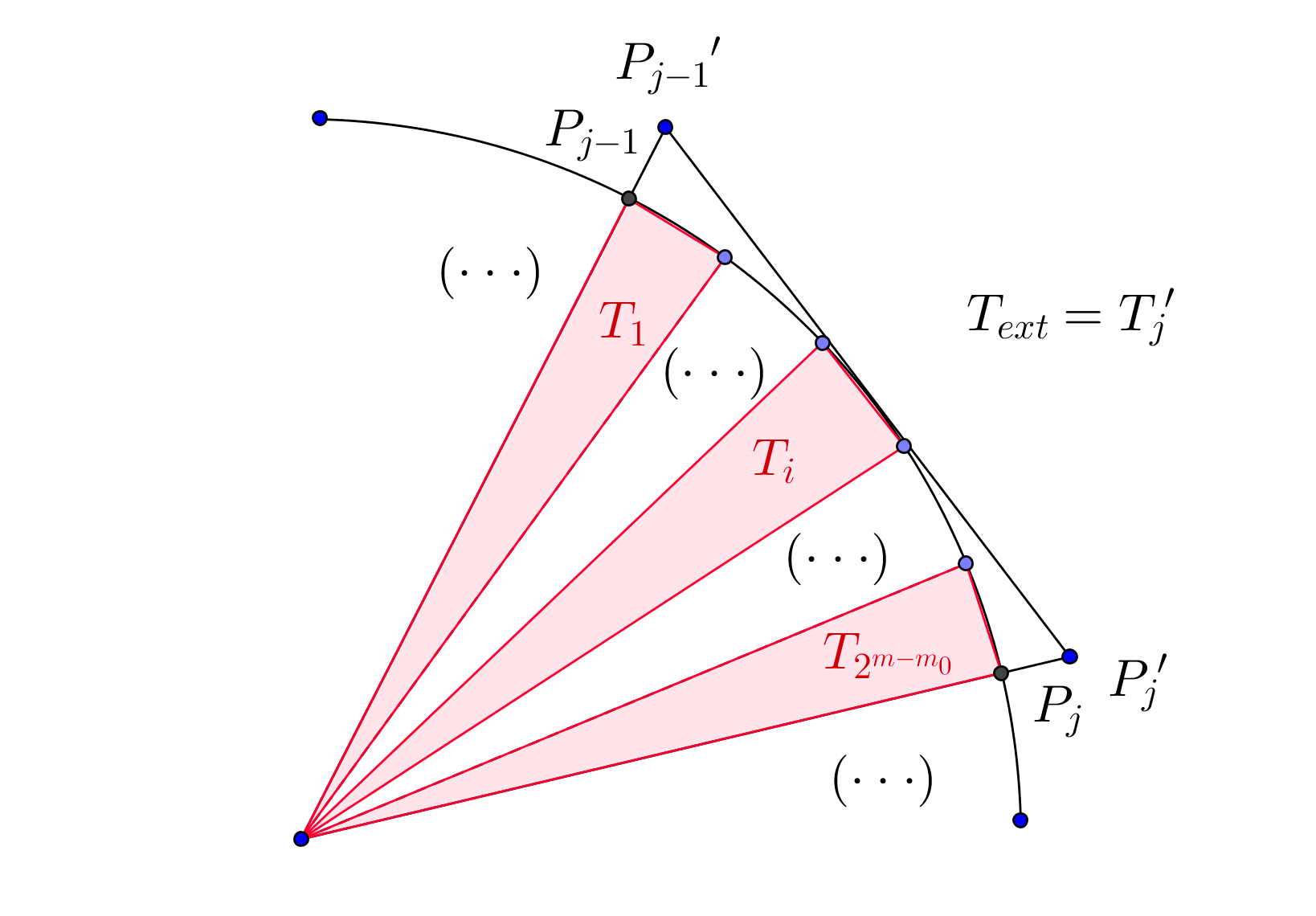}
\caption{\label{fig9} Tri\'angulos interiores $T_i$ contenidos en $T_{ext}$.}
\end{figure}

Como ya explicamos en la demostraci\'on de la Proposici\'on 2, tenemos $\displaystyle h_1=h_2=\ldots=h_{2^{m-m_0}}=h^{(m)}=\min_{1\le i\le 2^{m-m_0}}h_i$, y $\ell_i=\ell^{(m)}$, por lo tanto la cadena de desigualdades anterior, aplicada a $n=2^{m-m_0}$ y tomando $\ell^{(m_0)}$ y $h^{(m_0)}$ en lugar de $\ell^{(0)}$ y $h^{(0)}$, puede reescribirse como sigue:
$$2^{m-m_0}\ell^{(m)}\le \frac{ 2|\bigcup T_i|}{h^{(m)}}\le\frac{2|T_{ext}|}{h^{(m)}}=\frac{\ell^{(m_0)}}{h^{(m_0)}h^{(m)}}.$$
Sumando estas desigualdades a lo largo a todos los $2^{m_0}$ tri\'angulos $T_j'$ (y notando que todas las cantidades involucradas no dependen del \'indice del tri\'angulo $T_j'$), obtenemos
$$2^{m_0}\cdot 2^{m-m_0}\ell^{(m)}\le 2^{m_0}\cdot \frac{2\left|\bigcup T_i\right|}{h^{(m)}}\le 2^{m_0}\cdot\frac{2|T_{ext}|}{h^{(m)}}=\frac{2^{m_0}\ell^{(m_0)}}{h^{(m_0)}h^{(m)}}$$
$$\qquad\Leftrightarrow L_m\le 2\cdot\frac{|\Sigma_m|}{h^{(m)}}\le 2\cdot\frac{|\Sigma_{m_0}'|}{h^{(m)}}=\frac{L_{m_0}}{h^{(m_0)}h^{(m)}}.$$
De aqu\'i se desprenden dos cosas:
\begin{itemize}
\item Como la Proposici\'on 3 asegura que $\displaystyle\frac{1}{h^{(m)}}=\frac{1}{\sqrt{1-\left(\frac{\ell^{(m)}}{2}\right)^2}}<\sqrt{1+\frac{2\varepsilon \left(h^{(0}\right)^2}{\ell^{(0)}}}$, se sigue de la primera desigualdad que

\begin{align}\label{una}L_m\le 2|\Sigma_m|\sqrt{1+\frac{2\varepsilon \left(h^{(0)}\right)^2}{\ell^{(0)}}}.\end{align}

\item La igualdad dice que $\displaystyle 2|\Sigma_{m_0}'|=\frac{L_{m_0}}{h^{(m_0)}}$, y sabemos adem\'as que $\displaystyle\frac{1}{h^{(m_0)}}<\sqrt{1+\frac{2\varepsilon \left(h^{(0}\right)^2}{\ell^{(0)}}}$, por lo tanto

\begin{align}
\label{dos} 2|\Sigma_{m_0}'|\le L_{m_0}\sqrt{1+\frac{2\varepsilon \left(h^{(0)}\right)^2}{\ell^{(0)}}}.
\end{align}
El hecho de que $\Sigma_m\subset S(A,B)$ garantiza que para cualquier pol\'igono $\Sigma'$ conteniendo a $S(A,B)$ se tiene $|\Sigma_m|\le|\Sigma'|$, de modo que 
$$|\Sigma_m|\le\inf\left\lbrace|\Sigma'|:\Sigma'\text{ es pol\'igono que contiene a }S(A,B)\right\rbrace=|S(A,B)|,$$
lo cual puede aplicarse en la relaci\'on $\eqref{una}$ para establecer que

\begin{align}\label{conc1}L_m\le 2|S(A,B)|\sqrt{1+\frac{2\varepsilon \left(h^{(0)}\right)^2}{\ell^{(0)}}}.\end{align}

Por otra parte, $|S(A,B)|=\inf\left\lbrace|\Sigma'|:\Sigma'\text{ es pol\'igono que contiene a }S(A,B)\right\rbrace$ es menor o igual que $|\Sigma_{m_0}'|$ porque $\Sigma_{m_0}'$ contiene a $S(A,B)$. Aplicando esto en la relaci\'on $\eqref{dos}$ vemos que
\begin{align}\label{conc2}2|S(A,B)|\le L_{m_0}\sqrt{1+\frac{2\varepsilon \left(h^{(0)}\right)^2}{\ell^{(0)}}}.\end{align}

Uniendo lo obtenido en $\eqref{conc1}$ y $\eqref{conc2}$ podemos escribir lo siguiente:

$$L_m\le 2|S(A,B)|\sqrt{1+\frac{2\varepsilon \left(h^{(0)}\right)^2}{\ell^{(0)}}}\le L_{m_0}\left(1+\frac{2\varepsilon \left(h^{(0)}\right)^2}{\ell^{(0)}}\right).
$$

Recordando que la sucesi\'on $\{L_m\}$ es creciente y $\displaystyle\lim_{m\to\infty}L_m=|\arc{AB}|$, resulta claro que $\displaystyle L_{m_0}\left(1+\frac{2\varepsilon \left(h^{(0)}\right)^2}{\ell^{(0)}}\right)\le |\arc{AB}|\left(1+\frac{2\varepsilon \left(h^{(0)}\right)^2}{\ell^{(0)}}\right)$, lo que permite concluir que
$$L_m\le 2|S(A,B)|\sqrt{1+\frac{2\varepsilon \left(h^{(0)}\right)^2}{\ell^{(0)}}}\le |\arc{AB}|\left(1+\frac{2\varepsilon \left(h^{(0)}\right)^2}{\ell^{(0)}}\right).$$
Como esto vale para todo $m\ge m_0$, las desigualdades deben mantenerse en el l\'imite cuando $m\to\infty$, es decir,
$$|\arc{AB}|\le 2|S(A,B)|\sqrt{1+\frac{2\varepsilon \left(h^{(0)}\right)^2}{\ell^{(0)}}}\le |\arc{AB}|\left(1+\frac{2\varepsilon \left(h^{(0)}\right)^2}{\ell^{(0)}}\right).$$
Hemos llegado a una relaci\'on que s\'olo depende de $\varepsilon$ (recordemos que $A$ y $B$ son fijos), y que es v\'alida para cada $\varepsilon>0$, de modo que las desigualdades perduran cuando $\varepsilon\to 0^+$. Esto \'ultimo nos lleva a concluir que $|\arc{AB}|\le 2|S(A,B)|\le |\arc{AB}|$, lo cual s\'olo es posible si 
$$|\arc{AB}|=2|S(A,B)|.$$

\end{itemize}

\end{proof}

\section{Existencia de $\sen(x)$ para $x\in\left[0,\frac{\pi}{2}\right]$ arbitrario}
Con el resultado del teorema anterior bajo la manga nos enfocaremos ahora en lo que nos motiv\'o a realizar este trabajo.
\begin{defi}
Sea $\mathcal{C}$ el cuarto de circunferencia usado en la secci\'on anterior, y sean $Q=(1,0)$ e $Y=(\sqrt{1-y^2},y)$. Para cada $y\in [0,1]$ definimos
$$\arcsen(y):=\left\lbrace\begin{array}{l}|\arc{YQ}|,$ si $y>0\\
0,$\ \ \ \ \ si $y=0\end{array}\right. .$$
\end{defi}

Esta definici\'on, a diferencia de la definici\'on que se da para $\sen(x)$, responde de manera inmediata a la interrogante que planteamos en el resumen de este trabajo: sin importar el real $y$ entre $0$ y $1$ elegido, el valor $\arcsen(y)$ estar\'a siempre bien definido.\\
\\
Nuestro objetivo consiste en mostrar que esta funci\'on es continua, y que por lo tanto satisface la propiedad del valor intermedio; esto \'ultimo, sumado a la monoton\'ia de $\arcsen$ (que demostraremos m\'as adelante), permitir\'a garantizar la existencia de $\sen(x)$ bajo su definici\'on usual para cualquier valor de $x\in\left[0,\frac{\pi}{2}\right]$. En concreto, queremos probar que para cualquier $y_0\in [0,1]$ se cumple que $\displaystyle\lim_{y\to y_0}|\arcsen(y)-\arcsen(y_0)|=0$.\\
\\
Dado que se trata de una funci\'on no algebraica, lograr el v\'inculo entre las magnitudes $|y-y_0|$ y $|\arcsen(y)-\arcsen(y_0)|$ resulta ser una tarea delicada. El teorema mostrado al final de la secci\'on anterior juega un papel fundamental en este punto, ya que permite vincular $\arcsen(y)$ con una magnitud m\'as d\'ocil al momento de probar continuidad. Sin hablar m\'as, definamos $g(y):=|S(Y,Q)|$, y notemos que $\arcsen(y)=2g(y)$ para cada $y\in [0,1]$, de modo que $\arcsen$ es una funci\'on continua si y s\'olo si $g$ lo es.

\begin{prop}
La funci\'on $g:[0,1]\to\mathbb{R}$ definida arriba es continua.
\end{prop}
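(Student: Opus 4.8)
El plan es reducir la continuidad de $g$ a la desigualdad $|g(y)-g(y_0)|\le |Y_yY_{y_0}|$, v\'alida para cualesquiera $y,y_0\in[0,1]$ (escribo $Y_t=(\sqrt{1-t^2},t)$), y luego explotar que $Y_t$ depende continuamente de $t$. El primer ingrediente es la aditividad del \'area de los sectores: si $0<y_1<y_2\le 1$, el sector $S(Y_{y_2},Q)$ es la uni\'on de $S(Y_{y_2},Y_{y_1})$ y $S(Y_{y_1},Q)$, regiones que s\'olo comparten el segmento $\overline{OY_{y_1}}$, de \'area nula; por lo tanto $g(y_2)-g(y_1)=|S(Y_{y_2},Y_{y_1})|\ge 0$. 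De paso esto muestra que $g$, y con ella $\arcsen=2g$, es creciente (hecho que el trabajo usar\'a m\'as adelante); y como $g(0)=|S(Q,Q)|=0$, la igualdad $g(y_2)-g(y_1)=|S(Y_{y_2},Y_{y_1})|$ sigue siendo v\'alida cuando $y_1=0$.

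El segundo ingrediente es la cota $|S(A,B)|\le|AB|$ para cualesquiera $A,B\in\mathcal C$. En efecto, el Teorema 3 y la desigualdad \eqref{cota} de la Proposici\'on 2 (v\'alida para todo $m$, y por tanto en el l\'imite) dan
$$2|S(A,B)|=|\arc{AB}|=\lim_{m\to\infty}L_m\le\frac{|AB|}{\left(h^{(0)}\right)^2},$$
y como $\left(h^{(0)}\right)^2=1-\left(\tfrac{|AB|}{2}\right)^2\ge\tfrac12$ (dos puntos de $\mathcal C$ distan a lo m\'as $\sqrt2$), resulta $|S(A,B)|\le|AB|$. (Alternativamente, el Lema 1 da $S(A,B)\subseteq T_{ext}$ con $|T_{ext}|=\tfrac{|AB|}{2h^{(0)}}\le\tfrac{|AB|}{\sqrt2}$, lo que evita invocar el Teorema 3.) Juntando ambos ingredientes se obtiene, para $y_1<y_2$ en $[0,1]$, que $|g(y_2)-g(y_1)|=|S(Y_{y_2},Y_{y_1})|\le|Y_{y_1}Y_{y_2}|$, y por simetr\'ia esto vale para cualquier par $y_1,y_2\in[0,1]$.

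Para concluir fijo $y_0\in[0,1]$. La funci\'on $t\mapsto\sqrt{1-t^2}$ es continua en $[0,1]$ (composici\'on de funciones continuas, tambi\'en en $t=1$, donde $1-t^2=0$), de modo que
$$|Y_yY_{y_0}|=\sqrt{\left(\sqrt{1-y^2}-\sqrt{1-y_0^2}\right)^2+(y-y_0)^2}\ \overset{y\to y_0}{\longrightarrow}\ 0;$$
usando la desigualdad reci\'en establecida y el teorema del sandwich se obtiene $\lim_{y\to y_0}|g(y)-g(y_0)|=0$. Como $y_0$ era arbitrario, $g$ es continua en $[0,1]$.

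El paso que espero que sea el verdadero obst\'aculo es la aditividad del \'area de los sectores empleada arriba: conviene justificarla con cuidado dentro de las premisas sobre conjuntos medibles de la subsecci\'on 1.3. De hecho basta la versi\'on subaditiva $|S(Y_{y_2},Q)|\le|S(Y_{y_1},Q)|+|S(Y_{y_2},Y_{y_1})|$ (junto con la monoton\'ia del \'area, que es inmediata de la definici\'on por \'infimos), y esta puede obtenerse pegando pol\'igonos exteriores casi \'optimos de los dos subsectores, o bien repitiendo para ellos la construcci\'on de pol\'igonos interiores y exteriores de esa subsecci\'on y pasando al l\'imite. La cota del segundo p\'arrafo y la continuidad de $t\mapsto Y_t$ son rutinarias una vez disponible la maquinaria de la Secci\'on 1.
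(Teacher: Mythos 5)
Tu demostraci\'on es correcta (m\'odulo la aditividad de las \'areas de sectores circulares, que el propio art\'iculo tambi\'en da por supuesta en este punto y s\'olo justifica al final de la secci\'on 3; haces bien en se\~nalarla como el verdadero obst\'aculo y en observar que basta la subaditividad junto con la monoton\'ia del \'infimo), pero sigue una ruta genuinamente distinta de la del art\'iculo. El art\'iculo trabaja puntualmente en $y_0$: encierra el sector $S(Y,Y_0)$ en el tri\'angulo $\triangle ZOY_0$, donde $Z$ es la intersecci\'on de la tangente a $\mathcal C$ en $Y_0$ con la recta $OY$, calcula expl\'icitamente el vector $\overrightarrow{Y_0Z}$ resolviendo el sistema dado por la perpendicularidad y el teorema de Thales, y muestra que tiende a $(0,0)$ cuando $y\to y_0$. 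T\'u, en cambio, estableces la cota uniforme $|S(A,B)|\le |AB|$ para todo par $A,B\in\mathcal C$ (sea v\'ia el Teorema 3 junto con la desigualdad \eqref{cota}, sea directamente v\'ia $S(A,B)\subset T_{ext}$ con $\left|T_{ext}\right|=\frac{|AB|}{2h^{(0)}}$ y $h^{(0)}\ge\frac{1}{\sqrt 2}$), y concluyes por la continuidad de $t\mapsto Y_t$ y el teorema del sandwich. Lo que gana tu enfoque: un m\'odulo de continuidad uniforme (de hecho $g$ resulta lipschitziana respecto de la distancia cordal), la monoton\'ia de $g$ como subproducto (que el art\'iculo demuestra por separado despu\'es de la Proposici\'on 4) y la ausencia de c\'alculos en coordenadas; a cambio usa el hecho global de que dos puntos de $\mathcal C$ distan a lo m\'as $\sqrt 2$. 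El enfoque del art\'iculo es puramente local en $y_0$ y no requiere esa cota global ni volver a invocar el Teorema 3 dentro de la prueba, pero paga el precio de una verificaci\'on algebraica expl\'icita. Ambos argumentos descansan en el mismo ingrediente geom\'etrico de fondo, la contenci\'on del sector en un tri\'angulo determinado por una recta tangente (\'ultima parte del Lema 1).
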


\begin{proof}
Sea $y_0\in [0,1]$ arbitrario y sea $Y_0$ el punto en $\mathcal{C}$ de coordenadas $(x_0,y_0)=(\sqrt{1-y_0^2},y_0)$. Para cada $y\not=y_0$, $y\in [0,1]$, sea $Y=(x,y)=(\sqrt{1-y^2},y)$ y sea $Z=\gamma_1\cap\gamma_2$, siendo $\gamma_1$ la recta tangente a $\mathcal{C}$ en $Y_0$ y $\gamma_2$ la recta que pasa por $\overline{OY}$, como se muestra en la Figura \ref{fig10}.\\

\begin{figure}[htbp!]
\centering
\includegraphics[scale=0.48]{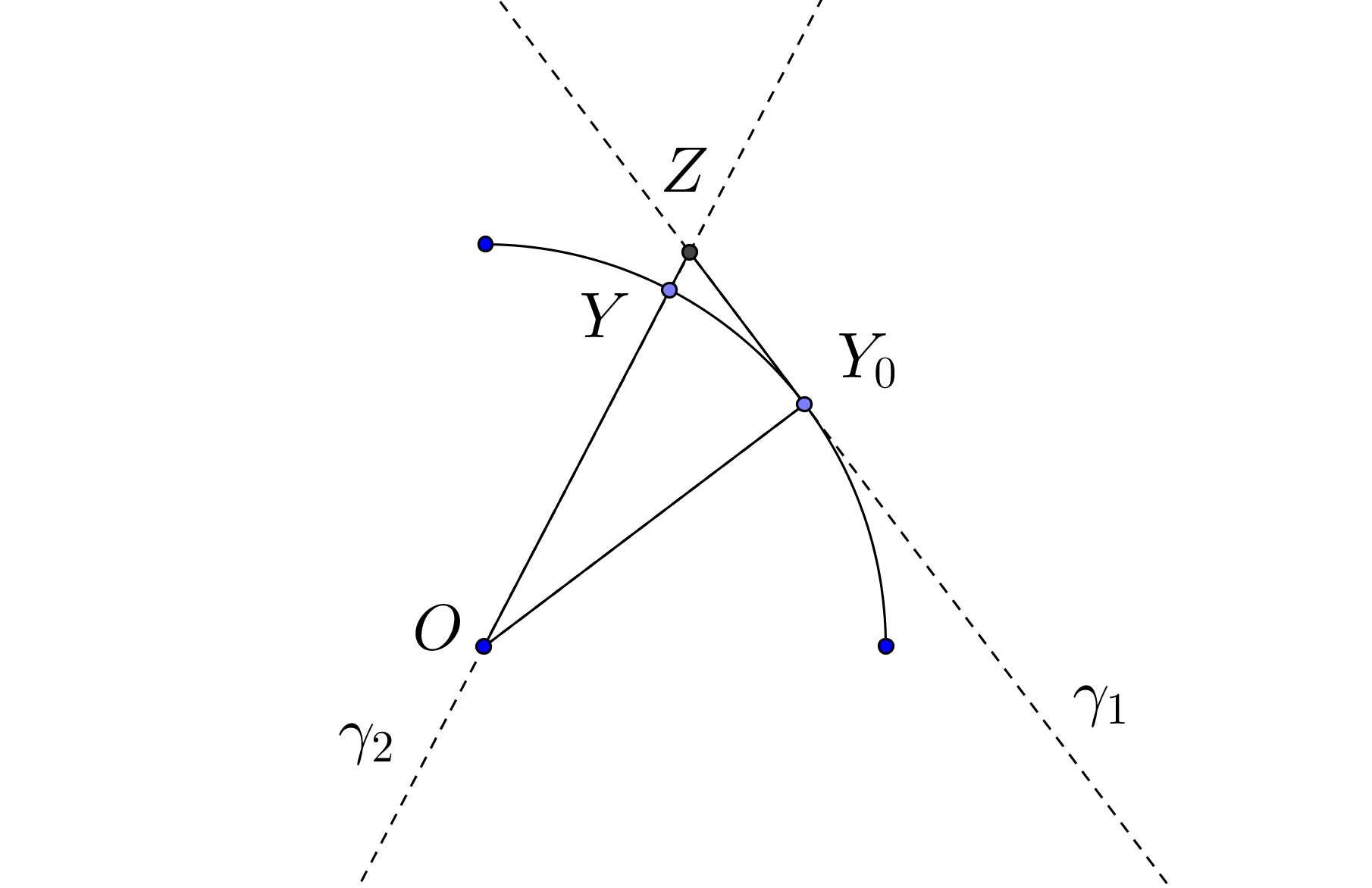}
\caption{\label{fig10} Continuidad de $g$.}
\end{figure}

\noindent El \'area del sector circular $S(Y,Y_0)$, que viene dada por $|g(y)-g(y_0)|$ (al final de la secci\'on 3 daremos una justificaci\'on para la aditividad del \'area de sectores circulares), es menor o igual que el \'area del tri\'angulo $\triangle ZOY_0$ para todo $y$ (consecuencia de la definici\'on de \'area que dimos y del Lema 1), de modo que intentaremos probar que $|\triangle ZOY_0|\to 0$ cuando $y\to y_0$ para concluir que $|g(y)-g(y_0)|\to 0$ cuando $y\to y_0$.\\
\\
El hecho de que $|\triangle ZOY_0|$ tiende a cero se sigue del hecho de que $|\overline{ZY_0}|$ tiende a cero cuando $y\to y_0$. En efecto, si denotamos por $(u,v)$ al vector $\overrightarrow{Y_0Z}$, como $\overrightarrow{Y_0Z}\perp \overrightarrow{OY_0}$ el producto punto entre ambos vectores debe ser igual a cero, es decir,
\begin{align}
\label{ppunto}ux_0+vy_0=0.
\end{align}
Por otra parte, si denotamos por $Y'$ y $Z'$ a las proyecciones de los puntos $Y$ y $Z$ sobre el eje $X$, respectivamente, podemos aplicar el Teorema de Thales sobre los tri\'angulos $\triangle YOY'$ y $\triangle ZOZ'$ (Figura \ref{fig11}) para obtener

\begin{align}
\label{thales}\frac{x_0+u}{x}=\frac{y_0+v}{y}.
\end{align}

\begin{figure}[htbp!]
\centering
\includegraphics[scale=0.5]{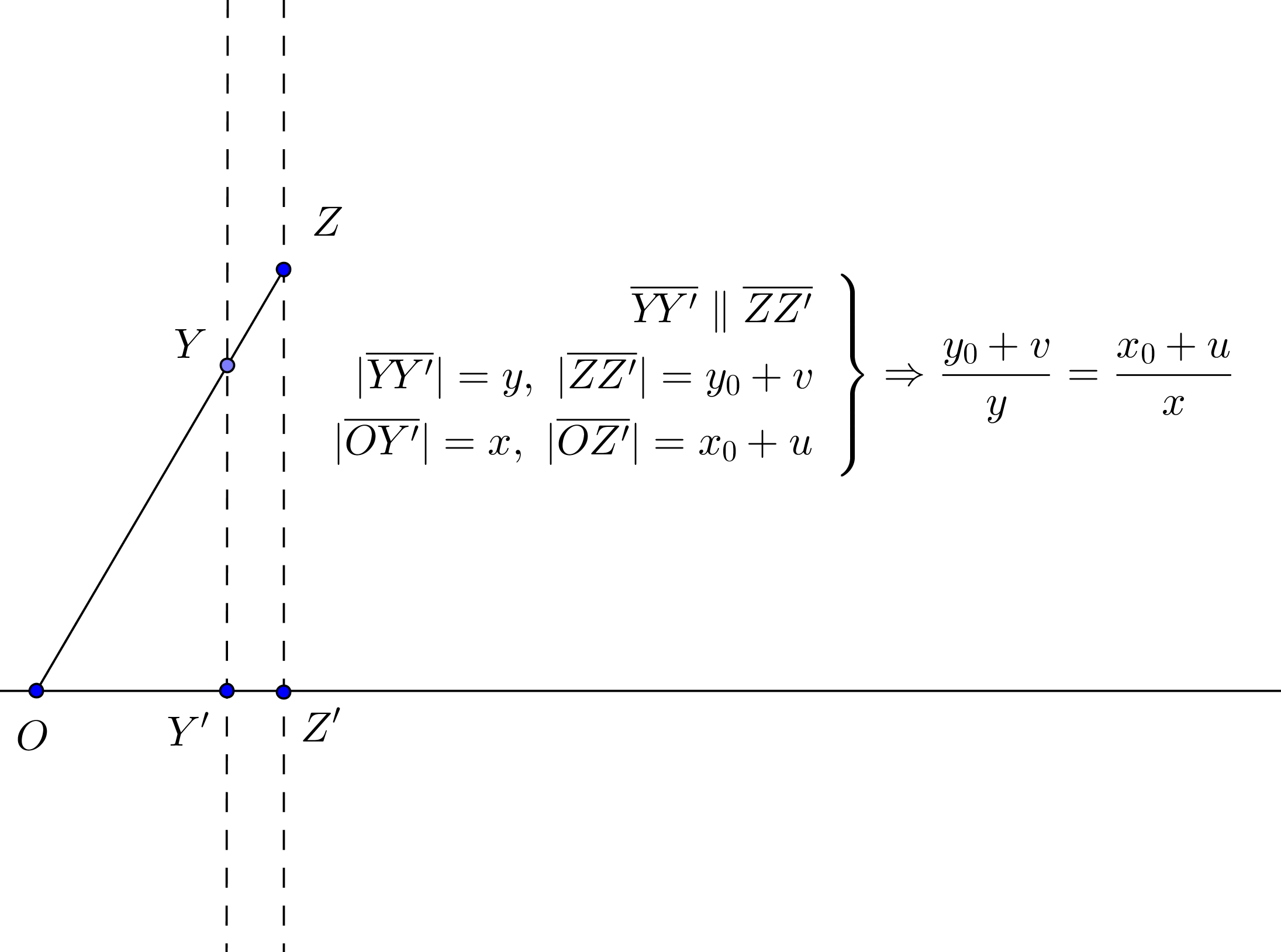}
\caption{\label{fig11} Teorema de Thales sobre $\triangle YOY$ y $\triangle ZOZ'$.}
\end{figure}

\noindent Todo esto permite hallar expl\'icitamente $\overrightarrow{Y_0Z}$, cuyas coordenadas son la soluci\'on $(u,v)$ al sistema formado por las ecuaciones $\eqref{ppunto}$ y $\eqref{thales}$:
$$\overrightarrow{Y_0Z}=(u,v)=\left(\frac{y_0}{xx_0+yy_0}(xy_0-x_0y),\frac{-x_0}{xx_0+yy_0}(xy_0-x_0y)\right).$$
Como $x=\sqrt{1-y^2}\overset{y\to y_0}{\longrightarrow}\sqrt{1-y_0^2}=x_0$, el resultado es el que quer\'iamos:
$$(u,v)\overset{y\to y_0}{\longrightarrow}\left(\frac{y_0}{x_0^2+y_0^2}(x_0y_0-x_0y_0),\frac{-x_0}{x_0^2+y_0^2}(x_0y_0-x_0y_0)\right)=(0,0).$$

\noindent Como dijimos en un comienzo, se verifica que $0\le |g(y)-g(y_0)|\le |\triangle ZOY_0|$, lo que permite obtener la continuidad de $g$:
$$|\triangle ZOY_0|\overset{y\to y_0}{\longrightarrow}0\Rightarrow |g(y)-g(y_0)|\overset{y\to y_0}{\longrightarrow}0.$$
\end{proof}

Observaci\'on: Si bien la Figura \ref{fig10} ilustra el caso $y>y_0$, los argumentos son completamente an\'alogos (y v\'alidos) para $y<y_0$.\\
\\
Se sigue directamente de la relaci\'on $\arcsen(y)=2g(y)$ y de las propiedades aritm\'eticas de los l\'imites que la funci\'on $\arcsen$ debe ser continua. Como dijimos en un comienzo, esto y el teorema del valor intermedio garantizan que para cada $x\in \left[0,\frac{\pi}{2}\right]$\footnote{En un sentido estricto, a\'un no sabemos que la longitud de arco $x$ pertenezca al intervalo $\left[0,\frac{\pi}{2}\right]$ ya que, hasta ahora, s\'olo podemos describir la longitud de $\mathcal{C}$ como el l\'imite (apenas sabemos que existe) de una sucesi\'on. Este asunto pasa por entender qu\'e representa $\pi$.  Hist\'oricamente, $\pi$ se ha definido (en realidad, de manera equivalente a lo que describiremos) como la longitud de un \textit{arco extendido} en una circunferencia unitaria; en un paso intelectual propio, podemos aplicar el procedimiento de la subsecci\'on 1.2 a la semicircunferencia que descansa en los cuadrantes $I$ y $II$, en lugar de aplicarlo s\'olo a $\mathcal{C}$, y luego definir $\pi$ como el l\'imite de la sucesi\'on de longitudes de las poligonales correpondientes. Como mostraremos al final del documento, las longitudes de arcos de circunferencia son aditivas, lo cual permite decir, junto con nuestra definici\'on de $\pi$, que la longitud del arco que va de $(1,0)$ a $(0,1)$ es exactamente igual a la mitad de la longitud del arco extendido, esto es, $\frac{\pi}{2}$.} existe al menos un $y\in [0,1]$ tal que $\arcsen(y)=x$. Sin embargo esto no es suficiente; si queremos definir $\sen(x)$ como el \'unico valor de $y\in [0,1]$ tal que $\arcsen(y)=x$, debemos probar precisamente aquello que acabamos de decir: tal valor de $y$ es \'unico.\\
\\
Esto no es dif\'icil de probar si observamos que, dados $a,b\in [0,1]$, se tiene que $\arcsen(a)\not=\arcsen(b)$ si y s\'olo si $g(a)\not=g(b)$, de modo que basta verificar la inyectividad de $g$ para concluir la de $\arcsen$. Afortunadamente, la funci\'on $g$ es estrictamente creciente gracias a nuestras premisas acerca de la medida de pol\'igonos.\\
\\
En efecto, sean $a,b\in [0,1]$ tales que $a>b$, y sea $\Sigma'$ un pol\'igono cualquiera conteniendo al sector $S(P(a),Q)$, para $Q=(1,0)$. Trazando una recta por $\overline{OP(b)}$ notamos que $\Sigma'$ puede escribirse como la uni\'on disjunta de dos pol\'igonos $\Sigma'_1$ y $\Sigma_2'$, donde $S_1:=S(P(a),P(b))\subset \Sigma_1'$ y $S_2:=S(P(b),Q)\subset \Sigma_2'$. En la Figura \ref{fig11} mostramos un posible pol\'igono $\Sigma'$ y los dos pol\'igonos $\Sigma_1'$ y $\Sigma_2'$ asociados.

\begin{figure}[htbp!]
\centering
\includegraphics[scale=0.45]{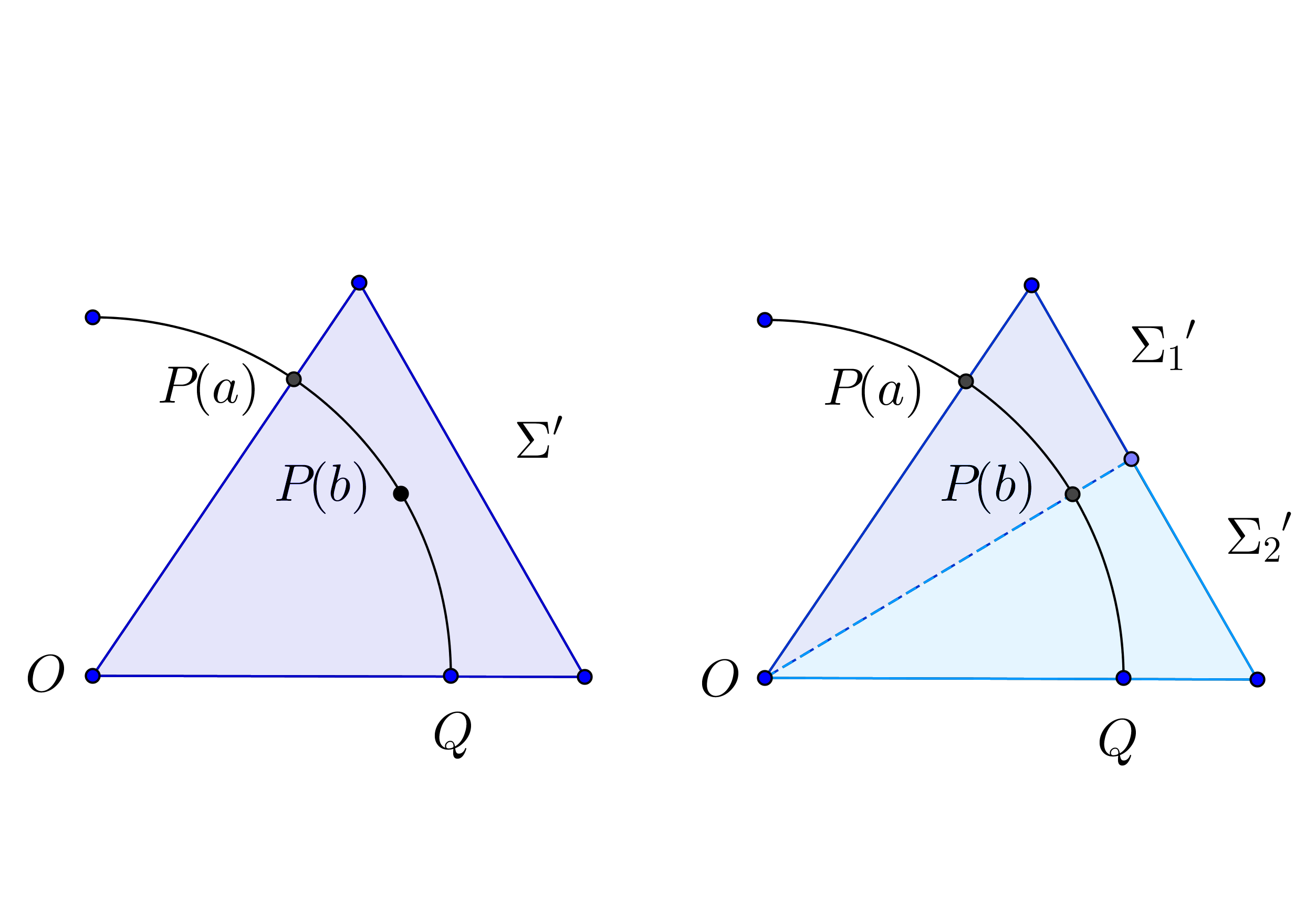}
\caption{\label{fig12} Un pol\'igono exterior arbitrario.}
\end{figure}

\noindent La definici\'on que dimos del \'area de un sector circular nos asegura que
$$|S_1|+|S_2|\le |\Sigma_1'|+|\Sigma_2'|=|\Sigma'|.$$

\noindent Como $\Sigma'$ es arbitrario, la desigualdad anterior se mantiene al tomar el \'infimo sobre todos los posibles valores de $|\Sigma'|$, de modo que

$$|S_1|+|S_2|\le |S(P(a),Q)|=g(a).$$ 
Notando que $|S_1|+|S_2|>|S_2|=g(b)$ ($\triangle P(a)OP(b)$ est\'a contenido en cualquier pol\'igono que contiene a $S_1$ y $|P(a)OP(b)|>0$, luego $S_1=\inf\{|\Sigma|:S_1\subset \Sigma,\Sigma\text{ pol\'igono}\}>0$)  concluimos que $g(b)<g(a)$, lo que implica la inyectividad de $g$, y por lo tanto tambi\'en la de $\arcsen$. Todo esto permite decir que, dado $x\in \left[0,\frac{\pi}{2}\right]$, existe un \'unico $y\in [0,1]$ tal que $\arcsen(y)=x$, al cual podemos denotar por $\sen(x)$.

\section{Unicidad de la definici\'on de longitud de arco}

Para definir la longitud del arco $\arc{AB}$ hemos apostado nuestras fichas a la idea de que las poligonales que construimos se parecen (en un sentido intuitivo) m\'as al arco a medida que el \'indice $m$ aumenta. Razonablemente, alguien podr\'ia encontrar una sucesi\'on distinta de poligonales que tambi\'en cumpla la caracter\'istica anterior (por ejemplo, dividiendo el arco $\arc{AB}$ en tres arcos en lugar de dos, y luego repitiendo este proceso sobre cada uno de los arcos obtenidos), y luego definir $|\arc{AB}|$ en t\'erminos de ella, lo que nos lleva de inmediato a la siguiente duda: ?`Ser\'a posible que $|\arc{AB}|$ tenga dos valores diferentes al definirse mediante dos sucesiones de poligonales distintas, a pesar de que ambas cumplan con parecerse m\'as y m\'as a $\arc{AB}$? Nos gustar\'ia poder decir que la respuesta a esta pregunta es ``no, la longitud de $\arc{AB}$ no depende de la sucesi\'on de poligonales elegida'', pero para ello necesitamos aclarar algo: ?`Qu\'e significa en concreto que una sucesi\'on est\'e formada por poligonales que se parecen m\'as y m\'as al arco $\arc{AB}$? Decidiremos que una poligonal se parece a $\arc{AB}$ cuando los segmentos de ella se ``confunden'' con el arco, es decir, cuando las longitudes de todos ellos son suficientemente peque\~nas (Lema 2 y Proposici\'on 3). La siguiente definici\'on nos permitir\'a decir esto mismo con menos palabras.

\begin{defi}
Sea $\mathcal{P}=\{P_0,P_1,P_2,\ldots,P_n\}$ una colecci\'on finita ordenada de puntos (puntos $P_i$ cuyas ordenadas $y_i$ verifican $y_{i-1}>y_i,\ \forall i$) en $\arc{AB}$ tales que $P_0=A$ y $P_n=B$. Diremos que esta colecci\'on es una partici\'on de $\arc{AB}$ y definimos su norma por
$$\|\mathcal{P}\|:=\max_{i\in \{1,\ldots,n\}}|P_{i-1}P_i|.$$
\end{defi}

\begin{defi}
Dada $\mathcal{P}=\{P_0,P_1,\ldots,P_n\}$ una partici\'on de $\arc{AB}$, diremos que una poligonal formada por $n$ segmentos est\'a asociada a $\mathcal{P}$ si los v\'ertices de ella son puntos de $\mathcal{P}$ y si adem\'as el $i$-\'esimo segmento est\'a dado por $\overline{P_{i-1}P_i}$.
\end{defi}

Observaci\'on: Bajo estas definiciones, toda poligonal asociada a alguna partici\'on de $\mathcal{C}$ tiene sus v\'ertices en orden decreciente respecto de sus ordenadas.\\
\\
En virtud de la discusi\'on anterior a las definiciones, consideraremos solamente sucesiones de poligonales que est\'en asociadas a sucesiones de particiones cuya norma tienda a cero. Es decir, si una sucesi\'on de particiones $\{\mathcal{P}_m\}$ cumple que
\begin{align}
\label{norma}\lim_{m\to\infty}\|\mathcal{P}_m\|=0,
\end{align}
entonces reci\'en admitiremos a la sucesi\'on de poligonales asociadas como un candidato para definir $|\arc{AB}|$. El motivo de exigir esto viene de que, si $\delta>0$ es arbitrario y $\|\mathcal P_m\|<\delta,\ \forall m\ge m_0$ para alg\'un $m_0$, entonces $|P_{i-1}P_i|<\delta,\ \forall i\in \{1,\ldots ,m\}$ para cada $m\ge m_0$; esto no habla de otra cosa que el parecido que buscamos entre las poligonales y $\arc{AB}$.

\begin{defi}
Sea $\mathcal{P}$ una partici\'on de $\arc{AB}$. Si $\mathcal Q$ es un conjunto finito de puntos en $\arc{AB}$ tal que $\mathcal{P}\subset\mathcal{Q}$, entonces diremos que $\mathcal{Q}$ es un refinamiento de $\mathcal{P}$.
\end{defi}

Sean $\mathcal{P}$ y $\mathcal{Q}$ particiones de $\arc{AB}$ y denotemos por $L(\mathcal{P})$ y $L(\mathcal{Q})$ a las longitudes de las poligonales asociadas. Nuestro siguiente objetivo consiste en mostrar que $|L(\mathcal{P})-L(\mathcal{Q})|$ puede hacerse arbitrariamente peque\~no, siempre que tanto $\|\mathcal{P}\|$ como $\|\mathcal{Q}\|$ sean suficientemente peque\~nos. En \'ultima instancia, aplicando este resultado a la sucesi\'on de particiones que dan origen a las poligonales de la subsecci\'on 1.2, y a otra sucesi\'on $\{\mathcal{Q}_m\}$ cualquiera verificando la condici\'on $\eqref{norma}$, concluiremos que 
$$L(\mathcal{Q}_m)\overset{m\to\infty}{\longrightarrow}\lim L_m,$$
donde $\{L_m\}$ es la sucesi\'on usada en las p\'aginas anteriores.\\
\\
La siguiente proposici\'on nos ayudar\'a a lograr lo anterior usando refinamientos comunes a $\mathcal{P}$ y $\mathcal{Q}$.

\begin{prop}
Si $\mathcal{P}$ es una partici\'on cualquiera de $\arc{AB}$ y $\mathcal{P}'$ es un refinamiento de $\mathcal{P}$, entonces
$$|L(\mathcal{P}')-L(\mathcal{P})|\le\frac{\ell^{(0)}}{\left(h^{(0)}\right)^2}\cdot\frac{\|\mathcal{P}\|^2}{4-\|\mathcal{P}\|^2},$$
siendo $\ell^{(0)}$ y $h^{(0)}$ las magnitudes usadas en las secciones anteriores.
\end{prop}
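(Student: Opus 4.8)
The plan is to reduce everything to Proposition 1 applied sub-arc by sub-arc. First I would observe that a refinement never shortens a polygonal: applying the first inequality of Proposition 1 (equivalently, the triangle inequality) to the points of $\mathcal{P}'$ lying between two consecutive vertices $P_{i-1},P_i$ of $\mathcal{P}$, the corresponding piece of the refined polygonal has length at least $|P_{i-1}P_i|$; summing over $i$ gives $L(\mathcal{P}')\ge L(\mathcal{P})$, so $|L(\mathcal{P}')-L(\mathcal{P})|=L(\mathcal{P}')-L(\mathcal{P})$. (Note that $\mathcal{P}'$ is itself a partition of $\arc{AB}$ — its points are totally ordered by ordinate and $A,B$ remain the extremes — so $L(\mathcal{P}')$ is well defined.)

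Next I would set up the decomposition. Write $\mathcal{P}=\{P_0,\dots,P_n\}$ and, for each $i$, let $\mathcal{P}'_i$ be the points of $\mathcal{P}'$ in the closed sub-arc from $P_{i-1}$ to $P_i$, and let $L_i'$ be the length of the sub-polygonal they determine, so that $L(\mathcal{P}')=\sum_i L_i'$ and $L(\mathcal{P})=\sum_i |P_{i-1}P_i|$. Applying Proposition 1 to this sub-arc, with $P_{i-1},P_i$ in the roles of $A,B$ and the points of $\mathcal{P}'_i$ as the intermediate $P_j$'s, yields
$$|P_{i-1}P_i|\le L_i'\le\frac{|P_{i-1}P_i|}{h_i^{(0)}\,\min_j h_{ij}},$$
where $h_i^{(0)}$ is the altitude at $O$ of $\triangle P_{i-1}OP_i$ and the $h_{ij}$ are the altitudes at $O$ of the small triangles spanned by consecutive points of $\mathcal{P}'_i$.

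The crux is to bound the denominator uniformly. Every chord that occurs here — $P_{i-1}P_i$ itself, and each chord joining consecutive points of $\mathcal{P}'_i$ — joins two points of $\mathcal{C}$ lying between $P_{i-1}$ and $P_i$, so by Lemma 2 its length is at most $|P_{i-1}P_i|\le\|\mathcal{P}\|$; since $\mathcal{C}$ has diameter $\sqrt2$ we also have $\|\mathcal{P}\|\le\sqrt2<2$, so every denominator below is positive. Using the Pythagorean identity $h^2+(\text{chord}/2)^2=1$ for the altitude at $O$ of a triangle with vertex $O$ and opposite side a chord of $\mathcal{C}$ (the same relation pictured in Figure \ref{fig7}), each such altitude is at least $\sqrt{1-(\|\mathcal{P}\|/2)^2}$, hence
$$h_i^{(0)}\,\min_j h_{ij}\ge 1-\frac{\|\mathcal{P}\|^2}{4}=\frac{4-\|\mathcal{P}\|^2}{4},$$
and therefore
$$L_i'-|P_{i-1}P_i|\le |P_{i-1}P_i|\left(\frac{4}{4-\|\mathcal{P}\|^2}-1\right)=|P_{i-1}P_i|\cdot\frac{\|\mathcal{P}\|^2}{4-\|\mathcal{P}\|^2}.$$

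Finally I would sum over $i$ and estimate $L(\mathcal{P})$. Summing the last display gives $L(\mathcal{P}')-L(\mathcal{P})\le L(\mathcal{P})\cdot\dfrac{\|\mathcal{P}\|^2}{4-\|\mathcal{P}\|^2}$, so it remains to check $L(\mathcal{P})\le \dfrac{\ell^{(0)}}{(h^{(0)})^2}$. This is Proposition 1 once more, applied to the whole arc with the partition $\mathcal{P}$: by Lemma 2 each $|P_{i-1}P_i|\le|AB|=\ell^{(0)}$, so the Pythagorean identity above gives $h_i\ge h^{(0)}$ for all $i$, whence $\min_i h_i\ge h^{(0)}$ and $L(\mathcal{P})=\sum_i|P_{i-1}P_i|\le\dfrac{\ell^{(0)}}{h^{(0)}\min_i h_i}\le\dfrac{\ell^{(0)}}{(h^{(0)})^2}$. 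Combining the two estimates gives the claimed bound. I expect the only delicate point to be the uniform altitude estimate in the third paragraph: one must be sure that it is $\|\mathcal{P}\|$, the norm of the coarse partition, rather than of the refinement, that controls every chord appearing in the computation — and this is exactly what Lemma 2 provides, since each refinement chord lies within some $[P_{i-1},P_i]$.
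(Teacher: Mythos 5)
Your proposal is correct and follows essentially the same route as the paper: decompose $L(\mathcal{P}')-L(\mathcal{P})$ sub-arc by sub-arc, apply Proposition 1 to each piece, bound every altitude below by $\sqrt{1-(\|\mathcal{P}\|/2)^2}$ via the Pythagorean identity and Lemma 2, and finish with the bound $L(\mathcal{P})\le\ell^{(0)}/\bigl(h^{(0)}\bigr)^2$. The only cosmetic difference is that the paper first packages the per-segment estimate as $S_i\le\ell_i/h_i^2$ (using $h_i\le\min_j h_{ij}$) before invoking the norm, whereas you bound both altitude factors by $\|\mathcal{P}\|$ directly; the resulting inequality is identical.
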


\begin{proof}
Lo primero que debemos hacer es notar que la relaci\'on $\eqref{cota}$ es v\'alida para la longitud de cualquier poligonal con v\'ertices en $\mathcal{C}$, y no solamente para las que usamos en la subsecci\'on 1.2. En efecto, sean $R$ y $S$ puntos en $\mathcal{C}$ y sean $\ell^{(0)}$ y $h^{(0)}$ las longitudes de la base y la altura en $O$ del tri\'angulo $\triangle ROS$. Dada cualquier poligonal de $n$ segmentos uniendo $R$ y $S$ con v\'ertices en $\arc{RS}$, la Proposici\'on 1 dice, usando la misma notaci\'on, que
$$\sum_{i=1}^n\ell_i\le\frac{\ell ^{(0)}}{h^{(0)}\displaystyle \min_{1\le i\le n}h_i}.$$
Como vimos al probar la Proposici\'on 2, se cumple que $h^{(0)}\le\displaystyle\min_{1\le i\le n}h_i\Rightarrow \left(h^{(0)}\right)^2\le h^{(0)}\min_{1\le i\le n}h_i$, y por lo tanto 
\begin{align}\label{COTA}\sum_{i=1}^n\ell_i\le\frac{\ell^{(0)}}{\left(h^{(0)}\right)^2}.\end{align}

\noindent Volviendo a lo principal, escribamos $\mathcal{P}=\{P_0,P_1,\ldots ,P_n\}$, y sean $\ell_i$ y $h_i$ las magnitudes de la base y la altura en $O$, respectivamente, del tri\'angulo $\triangle P_{i-1}OP_i$ (como en la Proposici\'on 1). Como $\mathcal{P'}$ es un refinamiento, cada punto de $\mathcal{P}$ coincide con alg\'un punto de $\mathcal{P'}$, de modo que por cada segmento $\overline{P_{i-1}P_i}$ de la poligonal asociada a $\mathcal{P}$ hay uno o m\'as segmentos de la poligonal asociada a $\mathcal{P'}$ (para cada $i$, hay una porci\'on de la poligonal proveniente de $\mathcal{P'}$ que una $P_{i-1}$ con $P_i$). Sea entonces $S_i$ la longitud de la porci\'on de poligonal asociada $\mathcal{P'}$ que le corresponde a $\overline{P_{i-1}P_i}$.\\
\\
Al aplicar la relaci\'on \eqref{COTA} sobre esta porci\'on obtenemos que $\displaystyle S_i\le \frac{\ell_i}{h_i^2},$ lo que implica que
$$\sum_{i=1}^nS_i\le\sum_{i=1}^n\frac{\ell_i}{h_i^2}.$$
Observando con cuidado podemos notar que el miembro de la izquierda corresponde a $L(\mathcal{P'})$. Adem\'as $\sum \ell_i\le\sum S_i$ gracias a la primera desigualdad de la Proposici\'on 1, por lo tanto
$$|L(\mathcal{P'})-L(\mathcal{P})|=\sum_{i=1}^nS_i-\sum_{i=1}^n\ell_i\le \sum_{i=1}^n\left(\frac{\ell_i}{h_i^2}-\ell_i\right)=\sum_{i=1}^n\ell_i\left(\frac{1}{h_i^2}-1\right).$$
Para llegar a algo que dependa exclusivamente de $\|\mathcal{P}\|$ (despu\'es de todo, esto es lo que queremos) notemos que
$$\ell_i\le \|\mathcal{P}\|\Rightarrow h_i^2=1-\ell_i^2\ge 1-\left(\frac{\|\mathcal{P}\|}{2}\right)^2\Rightarrow\frac{1}{h_i^2}-1\le\frac{1}{1-\left(\frac{\|\mathcal{P}\|}{2}\right)^2}-1=\frac{\|\mathcal{P}\|^2}{4-\|\mathcal{P}\|^2},$$
donde la primera desigualdad es cierta por definici\'on, y la implicancia descansa en el hecho de que $\|\mathcal{P}\|\le \sqrt{2}$ (la partici\'on m\'as gruesa posible se obtiene al tomar solamente los puntos $A$ y $B$, estando ambos a distancia menor o igual que $\sqrt{2}$), y por lo tanto $\left(\frac{\|\mathcal{P}\|}{2}\right)^2\le 1$, de donde se sigue que $1-\left(\frac{\|\mathcal{P}\|}{2}\right)^2$ es positivo.\\
\\
Usando este resultado nos acercamos a lo que queremos:
$$\sum_{i=1}^n\ell_i\left(\frac{1}{h_i^2}-1\right)\le\sum_{i=1}^n\ell_i\cdot\frac{\|\mathcal{P}\|^2}{4-\|\mathcal{P}\|^2}=L(\mathcal{P})\cdot\frac{\|\mathcal{P}\|^2}{4-\|\mathcal{P}\|^2},$$
donde podemos aplicar una vez m\'as la relaci\'on \eqref{COTA} para concluir que
$$|L(\mathcal{P'})-L(\mathcal{P})|\le\frac{\ell^{(0)}}{\left(h^{(0)}\right)^2}\cdot\frac{\|\mathcal{P}\|^2}{4-\|\mathcal{P}\|^2}.$$
\end{proof}

El hecho de que la expresi\'on de la derecha tiende a cero cuando $\|\mathcal{P}\|$ lo hace nos permite responder la pregunta inicial de esta secci\'on de la siguiente manera. Sean $\{\mathcal{P}_m\}$ y $\{\mathcal{Q}_m\}$ dos sucesiones de particiones de $\mathcal C$ verificando la ecuaci\'on $\ref{norma}$. Para cada $m$ definamos $\mathcal{R}_m$ como la partici\'on obtenida al unir los puntos de las dos anteriores; $\mathcal{R}_m$ es claramente un refinamiento de ambas.\\
\\
Sea $\varepsilon>0$ arbitrario. Escribiendo $f(x)=\displaystyle\frac{\ell^{(0)}}{\left(h^{(0)}\right)^2}\cdot\frac{x^2}{4-x^2},$ la aritm\'etica de l\'imites indica que $f(x)\overset{x\to 0}{\longrightarrow}0$, por lo tanto existe $\delta>0$ tal que si $|x|<\delta$ entonces $f(x)<\displaystyle\frac{\varepsilon}{4}$.\\
\\
Dado que las normas de $\mathcal{P}_m$ y $\mathcal{Q}_m$ tienden a cero, existe un $m_0$ tal que $\|\mathcal{P}_m\|<\delta$ y $\|\mathcal{Q}_m\|<\delta$ para $m\ge m_0$, por lo tanto $f(\|\mathcal{P}_m\|),f(\|\mathcal{Q}_m\|)<\displaystyle\frac{\varepsilon}{4}$ para estos valores de $m$. Usando esto y la Proposici\'on 5 concluimos que
\begin{align*}|L(\mathcal{P}_m)-L(\mathcal{Q}_m)|&=|L(\mathcal{P}_m)-L(\mathcal{R}_m)+L(\mathcal{R}_m)-L(\mathcal{Q}_m)|\\
&\le |L(\mathcal{P}_m)-L(\mathcal{R}_m)|+|L(\mathcal{R}_m)-L(\mathcal{Q}_m)|\\
&\le f(\|\mathcal{P}_m\|)+f(\| \mathcal{Q}_m\|)<\frac{\varepsilon}{2},\ \ \forall m\ge m_0\end{align*}

En el procedimiento de la subsecci\'on 1.2, la norma de la $m$-\'esima partici\'on es menor o igual que $\frac{|AB|}{\sqrt{2}^m}$ para todo $m$, por lo tanto tiende a cero cuando $m$ tiende a infinito. As\'i, si $\mathcal{P}_m$ es tal partici\'on (teni\'endose entonces que $L(\mathcal{P}_m)=L_m$) y $\{\mathcal{Q}_m\}$ es una sucesi\'on cualquiera de particiones con norma tendiendo a cero, podemos concluir que $\displaystyle\lim_{m\to\infty}L(\mathcal{Q}_m)=\lim_{m\to\infty}L_m.$ En efecto, por la convergencia de $L_m$ existe $N$ tal que $|L_m-\lim L_m|<\varepsilon/2$ cuando $m\ge N$. Adem\'as por el resultado de arriba hay alg\'un $M$ tal que $|L(\mathcal{Q}_m)-L_m|<\varepsilon/2$ cuando $m\ge M$, de modo que
$$|L(\mathcal{Q}_m)-\lim L_m|\le |L(\mathcal{Q}_m)-L_m|+|L_m-\lim L_m|<\varepsilon,\ \ \forall m\ge \max\{M,N\},$$
lo que prueba que la sucesi\'on de longitudes $\{L(\mathcal{Q}_m)\}$ converge y su l\'imite es igual a $\displaystyle\lim_{m\to\infty}L_m$.\\
\\
En resumen, hemos probado que si
\begin{enumerate}
\item $\{\mathcal{Q}_m\}$ es una sucesi\'on de particiones de $\arc{AB}$ tal que $\|\mathcal{Q}_m\|\overset{m\to\infty}{\longrightarrow}0$,
\item a cada partici\'on $\mathcal{Q}_m$ asociamos una poligonal ordenada y llamamos $L(\mathcal{Q}_m)$ a la longitud de esta,
\item definimos $|\arc{AB}|:=\displaystyle\lim_{m\to\infty}L(\mathcal{Q}_m)$,
\end{enumerate}
entonces la magnitud $|\arc{AB}|$ as\'i definida es necesariamente igual a aquella definida en la secci\'on 1.2, a saber, $\displaystyle\lim_{m\to\infty}L_m$. Esto muestra que la relaci\'on $|\arc{AB}|=2|S(A,B)|$ se verifica a\'un cuando $|\arc{AB}|$ se define mediante $\mathcal{Q}_m$.\\
\\
Con esto hemos finalizado el trabajo que nos propusimos en un comienzo, pero antes de terminar queremos hacer una peque\~na reflexi\'on acerca de algo que qued\'o en el tintero. En el segundo p\'arrafo de la demostraci\'on de la Proposici\'on 4 afirmamos que el \'area del sector $S(Y,Y_0)$ viene dada por $|g(y)-g(y_0)|$, es decir, asumimos que la suma de las \'areas de dos sectores circulares adyacentes es igual al \'area del sector circular mayor formado por ambos (asumimos que se verifica la \textit{aditividad} para las \'areas de sectores circulares). Si bien este hecho podr\'ia demostrarse laboriosamente partiendo de la aditividad de las \'areas de pol\'igonos, queremos mostrar que tambi\'en es una consecuencia de lo que ya probamos en esta secci\'on.\\
\\
Para esto usaremos nuevamente el hecho de que $|\arc{AB}|=2|S(A,B)|$, y probaremos que la aditividad se verifica para las longitudes de arcos de circunferencia. Una vez demostrado esto \'ultimo, la aditividad de las \'areas ser\'a una consecuencia directa de la relaci\'on de proporcionalidad anterior. Antes de continuar, una vez m\'as, debemos adoptar un punto de partida; en este caso asumiremos que la aditividad de longitudes es v\'alida para poligonales, es decir, supondremos que la suma de las longitudes de dos poligonales que coinciden en uno de sus extremos es igual a la longitud de la poligonal mayor formada por ambas.\\
\\
Sean $A,B$ en $\mathcal{C}$ y sea $M$ cualquier punto en $\mathcal C$ entre $A$ y $B$. Sea $\{\mathcal{P}_m\}$ una sucesi\'on de particiones de $\arc{AB}$ verificando \eqref{norma}, de modo que $\displaystyle\lim_{m\to\infty}L(\mathcal {P}_m)=|\arc{AB}|$, y para cada $m$ sea $\mathcal{P}^*_m$ la partici\'on obtenida al incluir el punto $M$ en $\mathcal{P}_m$. La primera observaci\'on que debemos hacer consiste en que la sucesi\'on $\{\mathcal{P}^*_m\}$ verifica \eqref{norma}, y por lo tanto $\displaystyle\lim_{m\to\infty}L(\mathcal{P}^*_m)=|\arc{AB}|$. Lo siguiente consiste en notar que, para cada $m$, la partici\'on $\mathcal{P}^*_m$ puede verse como la uni\'on de dos colecciones, una que da forma a una partici\'on de $\arc{AM}$, digamos $\mathcal{Q}^*_m$, y otra que forma una partici\'on de $\arc{MB}$, a la que podemos llamar $\mathcal{R}^*_m$. Claramente las sucesiones $\{\mathcal{Q}^*_m\}$ y $\{\mathcal{R}^*_m\}$ tambi\'en verifican \eqref{norma}, por lo tanto las sucesiones de longitudes $\{L(\mathcal{Q}^*_m)\}$ y $\{L(\mathcal{R}^*_m)\}$ convergen a $|\arc{AM}|$ y $|\arc{MB}|$ respectivamente. Adem\'as nuestra premisa de arriba nos lleva directamente a la igualdad $L(\mathcal{P}^*_m)=L(\mathcal{Q}^*_m)+L(\mathcal{R}^*_m)$, v\'alida para todo $m$ natural, de donde se sigue que
$$|\arc{AB}|=\lim_{m\to\infty} L(\mathcal{P}^*_m)=\lim_{m\to\infty} (L(\mathcal{Q}^*_m)+L(\mathcal{R}^*_m))=\lim_{m\to\infty}L(\mathcal{Q}^*_m)+\lim_{m\to\infty}L(\mathcal{R}^*_m)=|\arc{AM}|+|\arc{MB}|.$$
Como dijimos antes, esto garantiza la aditividad de las \'areas de sectores circulares. En efecto, si $A,M,B$ son puntos de $\mathcal{C}$ con $M$ entre $A$ y $B$, tenemos que
$$|S(A,B)|=\frac{1}{2}|\arc{AB}|=\frac{1}{2}(|\arc{AM}|+|\arc{MB}|)=|S(A,M)|+|S(M,B)|.$$

\section{Agradecimientos}
Quiero dar las gracias a Duvan Henao, profesor de la Facultad de Matem\'aticas, quien fue mi gu\'ia en la escritura de este trabajo. Gracias por motivar las reflexiones iniciales sobre este problema, por darme posteriormente la privilegiada oportunidad de participar en esto, por las incontables ocasiones en que estuvo dispuesto a supervisar el progreso del texto, y por las enriquecedoras discusiones que surgieron en el camino.\\
\\
Tambi\'en agradezco al profesor Mario Ponce por aportar con el ejemplo de la poligonal inapropiada mostrada en el resumen, y al profesor V\'ictor Cort\'es por revisar este trabajo y aportar con sugerencias aplicadas en la versi\'on final del texto.

\end{document}